\numberwithin{equation}{section}
\newtheorem{theorem}{Theorem}[section]
\newtheorem{definition}[theorem]{Definition}
\newtheorem{proposition}[theorem]{Proposition}
\newtheorem{lemma}[theorem]{Lemma}
\newtheorem{corollary}[theorem]{Corollary}
\newtheorem{rmk}[theorem]{Remark}
\newtheorem{example}[theorem]{Example}
\newcommand{\R}{\mathbb R}
\newcommand{\lB}  {^B\!\nabla}
\newcommand{\lF}  {^F\!\nabla}
\newcommand{\lf}  {^f\!\nabla}
\newcommand{\TB}  {^B\!T}
\newcommand{\TF}  {^F\!T}
\newcommand{\Tf}  {^f\!T}
\newcommand{\cc}    {\nabla}
\newcommand{\dc}    {\nabla^*}
\title[Warped product on information geometry]{On warped product on information geometry: statistical manifolds and statistical models} 
\author{Nicolas Martinez-Alba}
\address{Departamento de Matem\'aticas, Universidad Nacional de Colombia, Bogot\'a, Colombia}
\email{nmartineza@unal.edu.co}
\author{Olga Garatejo Escobar}
\address{Universidad Nacional de Colombia, Bogot\'a, Colombia}\email{ocgaratejoe@unal.edu.co}
\begin{document}
\keywords{Fisher metric, statistical model, dual connection, statistical manifold, exponential family, mixture family and warped product}

\subjclass{62B11, 53Z50, 94A15}

\begin{abstract}
We study a differential geometric construction, the warped product, on the background geometry for information theory. Divergences, dual structures and symmetric 3-tensor are studied under this construction, and we show that warped product of manifolds endow with such structure also is endowed with the same geometric notion. However, warped product does not preserve canonical divergences, which in particular shows that warped product lacks of meaning in the information theory setting.

\end{abstract}

\maketitle

\tableofcontents
\section{Introduction}



  Science information is a term used to describe the interdisciplinary studies to explore and to analyze information theory and data scenarios \cite{science}. This type of studies lie on the interaction of several areas, such as statistical inference, signal processing, machine learning, or neural networks. A fundamental part of this interaction is the role of statistics, where probability theory is essential for making sense of data and modeling uncertainty. However, the scope of science information extends beyond statistics, and new developments have emerged in other branches of mathematics. One notable example of this interaction is the so called {\it geometric science information}. 

    Information geometry is a specialized field within geometry science information that applies concepts from differential geometry to the study of statistical models. Its foundation goes back to the independent works of H. Hotelling (1930) and R. Rao (1945), who proposed a mathematical framework to give a new point of view to families of probability  distributions. A central concept in this field is the Fisher information metric, which provides a differentiable structure on the space of probability densities. This metric allows for the exploration of various geometric properties within statistical models, such as distances between probability distributions and their curvature. The Fisher metric is deeply connected to key statistical concepts, including expected value, entropy, and divergence, making it a powerful tool for understanding the geometry of information theory. For a more detailed discussion of this topic, we refeer to Nielsen's work on information geometry \cite{frank} and Amari and Nagaoka's \cite{amariMeth} comprehensive insights.

In the probability setting, the exponential and mixture families are two fundamental statistical concepts that describe two types of probability densities, each with its own statistical and algebraic characteristics. These families gain importance in information geometry, where they not only provide statistical properties but also induce geometric structures, such as dual connections, torsion, geodesics, curvatures, divergences, and symmetric 3-tensors. In this work we focus our result to dual connection, Amari-Chentsov tensor and divergence functions. {\it Divergence functions} are generating functions that quantify the distance between two probability distributions, playing a crucial role in understanding of their geometric relations. The notion of {\it dual structure}, introduced by Amari and Chentsov, builds on the relationship between the exponential and mixture families, where each family is associated with a specific affine connection. This duality allows the analysis of statistical models from two complementary perspectives, providing deeper insights into their geometry. The {\it Amari-Chentsov tensor} together with divergences, offers a comprehensive framework for studying both, the statistical and geometric, properties of probability distributions, bridging the gap between statistical inference and geometry. The relationship between divergence, dual structure, and the Amari-Chentsov tensor lies at the heart of information geometry.

In this geometrical setting, the interaction of these three geometrical notion can be seen, in the geometry and in the probability world, as the following 2-level diagram of dependency:\\

%


\begin{tikzcd}[column sep=0.0001cm]
       &  \mathrm{Divergence} \arrow{dl} \arrow{dr}  & \\
   \mathrm{dual\ connection} \arrow[leftrightarrow]{rr}   &          &  \mathrm{\tiny 3-tensor}
\end{tikzcd}  
\begin{tikzcd}[column sep=0.0001 cm]
       & \mathrm{Canonical\ divergence}  \arrow{dl} \arrow{dr}  & \\
  (e,m)-\mathrm{dual} \arrow[leftrightarrow]{rr}   &    &  {\tiny \mathrm{ Amari-Chentsov}}
\end{tikzcd}\\

where the left-hand side diagram is the geometrical model of the information geometry concepts in the right-hand side diagram.

The warped product of Riemannian manifolds is a technique that allows to modify lenghts (by conformal symmetries of a metric) but preserving angles on each factor and trivial perpendicularity between both factors of the product manifold. This structure is a remarkable construction within the metric geometry \cite{oneil} but also in the context of information geoemtry as in \cite{SB,Fuj}. This paper is devoted to the study of a special case of geometric structure, the warped product of Riemannian manifolds, but in the context of statistical models and statistical manifolds. The situation of warped product for the case of flat dual connection is already known \cite{leonard}. But in the literature there is a lack of an associated result of this metric product to symmetric 3-tensor or the divergences functions. The main goal of this paper is to complete the previous diagram under the existence of a warped product. In particular, we have proved the following theorem:



{\bf Main Theorem:} {\it 
Let $F$ and $B$ be two manifolds, each one endowed with torsion-free dual connections $(\nabla_F,\nabla_F^*)$, $(\nabla_B,\nabla_B^*)$ and symmetric 3-tensors $T_F$, $T_B$.  Given a nonvanishing function $f$ in $B$,  then the warped product $F\times_f B$
\begin{itemize}
\item[(a)] is endowed with a trosion-free dual connections $(\nabla_f,\nabla_f^*)$ which is lifted from  $(\nabla_F,\nabla_F^*)$ and $(\nabla_B,\nabla_B^*)$.
\item[(b)] is endowed with a symmetric 3-tensor $T_f$ which is  lifted from $T_F$ and $T_B$.
\end{itemize}

If in addition there exist divergence functions $D_F$ and $D_B$ for the structures $(\nabla_F,\nabla_F^*, T_F)$ and $(\nabla_B,\nabla_B^*, T_B)$, then the warped product $F\times_f B$.
\begin{itemize}
\item[(c)]  is endowed with a divergence function $D_f$ for the structure $(\nabla_f,\nabla_f^*,T_f)$, which is  lifted from $D_F$ and $D_B$.
\end{itemize}}

Though the warped product preserves the main ingredients of the background geometry for information theory, this product is useless in information theory. When we restrict the main theorem to a {\it canonical divergence}, which contains more information than divergence, we realize that canonical divergence is not preserved by warped product. In conclusion, warped product is a crucial difference between the geometrical model of information theory and the actual information geometry.

{\bf Organization of the paper:} Section 2 is a brief summary of the explicit description of the Riemannian geometry used in the following sections, including the main geometric construction, the {\it warped product} of Riemanninan metrics. Section 3, contains the main notions of the geometry of statistical model, including the Fisher metric, exponential and mixture families and torsion-free dual  connections with explicit examples. In section 4, we abstract the notion of statistical model to any Riemannian manifold summarizing classical results. In particular we present new proofs for already known facts on information geometry
(Theorem \ref{estructuradualconexiAlpha} and Corollary \ref{cor4.5}) avoiding the use of Christoffel symbols (which are the usual proofs in literature). Also, in this Section we present an improved result for isostatistical immersion of statistical manifolds, Proposition~\ref{prop:nueva1}. In the last section we give the proof of our main theorem. The first item of the {\bf Main Theorem} is known in the literature \cite{leonard}, and in Section~5 we give the  proof of the remaining claims. The second and third claims are stated and proved in Theorem \ref{estructuraestadisticaWarped}, Lemma \ref{lem5.4}, and Theorem \ref{thm:warpedDiv}. Finally, we conclude that the warped product does not work in the case of canonical divergence by using the local representation of canonical divergence.


\section{Geometrical setting}

Let us begin by presenting some basic concepts of riemann manifolds. We refer the reader to \cite{Jo} for a more detailed description.

\subsection{Basics on Riemann geometry}
 A manifold $M$ of dimension $n$ is a connected paracompact Hausdorff space for which every point $x\in M$ has a neighborhood $U_x$ that is homeomorphic to an open subset $\Omega_x$ of $\R^n$. Such a homeomorphism $\phi_x : U_x \to \Omega_x$ is called a {\it coordinate chart}. If for two charts the function $\phi_x\circ \phi_y^{-1}:\Omega_y\to \Omega_x$ is a $C^r$-diffeomorphism we say that the manifold has a $C^r$-differentiable structure. The  collection $\{(U_x,\phi_x)\}$ is called {\it differentiable structure} or {\it smooth structure} for $M$.  We denote $T_xM$ to the vector space which consists of all tangent vectors to curves in $M$ on the point $x$. It is called the \emph{tangent space} of $M$ at the point $x$. 

A {\it Riemannian metric} on a differentiable manifold $M$ is given by a inner product $g$ on each tangent space $T_x M$ which depends smoothly on the base point $x$. A {\it Riemannian manifold} is a differentiable manifold equipped with a Riemannian metric. In any system of local coordinates $(x_1,\ldots,x_n)$ from coordinates charts, the Riemannian metric is represented by a positive definite, symmetric matrix $(g_{ij}(x))_{1\leq i,j\leq n}$ where the coefficients depend smoothly on $x$.  

Let $\gamma : [a, b]\to  M$ be a smooth curve. The length of $\gamma$ is defined as:

$$L(\gamma)=\int_a^b \|\gamma'(t)\|dt,$$ 
where the norm of the tangent vector $\gamma'$ is given by the Riemannian metric as $\| \gamma'(t)\|=g_{\gamma(t)}(\gamma'(t),\gamma'(t))$. This value is invariant under re-parametrization of the curve. Taking the infimum of the values $L(\gamma)$ among all the curves $\gamma$ joining two points $p,q\in M$ we can define a distance function on $M$ and the topology of this distance coincides with the topology of the manifold structure of $M$.

The metric tensor $g$ also allows us to define a natural differential operation on vector fields\footnote{Recall that {\it vector fields} are the same as differential operator of order 1 and can be represented as a smooth section from $M$ to $TM$. The space of vector fields is denoted by  $\Gamma(TM)$} that extend the notion of directional derivatives in the Euclidean case. This is known as {\it Levi-Civita connection}  $\nabla^{(0)}:\Gamma(TM)\times \Gamma(TM)\to \Gamma(TM)$ that is $\mathbb{R}$-bilinear but for the algebra of $C ^\infty(M)$ it is tensorial in the first variable and satisfies Leibniz rule for the second variable , i.e., 
$$\nabla^{(0)}_{fX}Y=f\nabla^{(0)}_XY \mbox{\ and\ } \nabla^{(0)}_XfY=(Xf)Y+f\nabla^{(0)}_XY,$$
where $f\in C^\infty(M)$. A fundamental theorem of Riemannian geometry states that this is the {\it unique} connection that is  torsion free and metric, that is:
$$\cc^{(0)}_XY-\cc^{(0)}_YX=[X,Y]\quad \mbox{\ and\  }\quad  Zg(X,Y)=g(\cc^{(0)}_ZX,Y)+g(X,\cc^{(0)}_ZY),$$
for any three vector fields $X,Y,Z$ in $M$, and $[\cdot,\cdot]$ is the commutator of vector fields as first-order differential operator, i.e., $[X,Y]=X\circ Y-Y\circ X$.

This operation also can be used for the tangent of a curve (also interpreted as the vector field along a curve) and gives us the following situation: given a smooth curve $\gamma:(a,b)\to M$, the curve is called {\it geodesic} if it satisfies $\nabla_{\gamma'}\gamma'=0$. In a local coordinates $(x_1,\dots,x_m)$, the geodesics can be written for each $i=1,\dots,m$ as the second-order ODE:
$$x_i''(t)+\sum_{j,k}\Gamma^i_{jk}(\gamma(t))x_j'(t)x_k'(t)=0,$$
where the functions $\Gamma^i_{jk}$ are known as the {\it Christoffel symbols} of $\nabla$.
\begin{theorem}{\cite[Theorem~1.4.2]{Jo}}
Let $M$ be a Riemannian manifold, $x\in M$ and $v\in T_xM$. Then there exist $\epsilon>0$ and precisely one geodesic $c : [0, \epsilon] \to M$ with $c(0) = x, c'(0)= v$. In addition, $c$ depends smoothly on $x$ and $v$.
\end{theorem}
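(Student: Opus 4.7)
The plan is to reduce the statement to the standard existence, uniqueness, and smooth dependence theorem for ordinary differential equations (Picard--Lindel\"of plus smooth dependence on initial data). The geodesic equation displayed just before the theorem is, in any local chart around $x$, a second-order ODE system in the coordinate functions $x_i(t)$. Since the Christoffel symbols $\Gamma^i_{jk}$ are smooth in the base point (they are built from the metric coefficients $g_{ij}$ and their first derivatives, which are smooth because $g$ depends smoothly on the base point), the right-hand side is a smooth function of the position.

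First I would fix a coordinate chart $(U,\phi)$ around $x$, translate the initial conditions $c(0)=x$, $c'(0)=v$ into coordinate data $(x_1^0,\dots,x_m^0)$ and $(v^1,\dots,v^m)$, and then reduce the second-order system
$$x_i''(t)+\sum_{j,k}\Gamma^i_{jk}(x(t))\,x_j'(t)\,x_k'(t)=0$$
to a first-order system on $TU\cong \Omega_x\times\R^m$ by introducing the velocity variables $y_i=x_i'$:
$$x_i'=y_i,\qquad y_i'=-\sum_{j,k}\Gamma^i_{jk}(x)\,y_j\,y_k.$$
The right-hand side is smooth in $(x,y)$, so the classical Picard--Lindel\"of theorem produces some $\epsilon>0$ and a unique smooth solution $t\mapsto(x(t),y(t))$ on $[0,\epsilon]$ with the prescribed initial conditions. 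Projecting to the $x$-component and transporting back through $\phi^{-1}$ yields the desired geodesic $c$, and uniqueness of the ODE solution forces uniqueness of $c$ on $[0,\epsilon]$.

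For the smoothness claim, I would invoke the companion ODE result on smooth dependence of solutions on initial conditions: enlarging the phase space to include $(x^0,v)$ as parameters, the flow of the first-order system depends smoothly on those parameters, which is exactly the assertion that $c$ depends smoothly on $(x,v)$. The only non-routine point to flag is coordinate invariance: the ODE is written in a particular chart, but since $\nabla^{(0)}$ is an intrinsic object on $M$, the curve obtained is independent of the chart, and two solutions living in overlapping charts agree on the overlap by uniqueness. I expect no substantial obstacle; the main care is in verifying that standard ODE theorems apply in the form needed, and in recording the passage from ``local coordinate curve'' to ``intrinsic geodesic on $M$.''
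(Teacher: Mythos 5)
Your proposal is correct and matches the standard argument: the paper itself states this result without proof, citing \cite[Theorem~1.4.2]{Jo}, and the proof given there is exactly your reduction of the geodesic equation to a first-order system followed by Picard--Lindel\"of together with smooth dependence on initial data. Your additional care about chart-independence and patching solutions on overlaps via uniqueness is the right (and routine) way to pass from the local ODE solution to an intrinsic geodesic on $M$.
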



\subsection{Warped product of Riemannian manifolds}

In some circumstances, we want to modify length but not angles, this can be done by conformal metric. When we want to mimic this idea for a product manifold, we include a conformal product in one of the Riemannian factor. This construction is called warped product and we will give some basic results of this notion.
\begin{definition}
Let $(B, g_B)$ and $(F, g_F)$ be two Riemannian manifolds of finite dimension and $f\in C^{\infty}(B)$ a positive function on B. \textbf{The warped product} $M=B\times_{f} F$ with warping function $f$, is the finite dimensional manifold $B\times F$ endowed with the metric $g_{f}$ given by:
\begin{equation*}
    g_{f}=\pi^{*}(g_{B})+\tilde{f}^{2}\sigma^{*}(g_{F}),
\end{equation*}

where $\pi^{*}$ and $\sigma^{*}$ are the pull-backs of the projections $\pi$ and $\sigma$ of $B \times F$ on B and F respectively, and $\tilde{f}=f\circ \pi$.
\end{definition}

We describe briefly the geometry of $M=B\times_{f} F$ in terms of the warping function $f$ and the geometries of B and F. For it, $B$ is called the leaf of $M$, and $F$ is the fiber of $M$. The fibre $\{p\} \times F=\pi^{-1}(p)$ and the leaves $B\times \{q\}=\sigma^{-1}(q)$ are Riemannian submanifolds of $M$ characterized by:

\begin{itemize}
    \item [1.] For each $q\in F$, the map $\pi |_{B\times \{q\}}$ is an isometry onto $B$.
    \item [2.] For each $p\in B$, the map $\sigma|_{\{p\}\times F}$ is a positive homothecy onto $F$, with scale factor $1/f(p)$.
    \item[3.] For each $(p,q)\in M$, the leaf $B\times \{q\}$ and the fiber $\{p\}\times F$ are orthogonal at $(p,q)$.
\end{itemize}
Tangent vectors  to leaves are called horizontal, and those tangent to fibers are called vertical. We denote by $H$ the orthogonal projection of $T_{(p,q)}M$ onto its horizontal subspace $T_{(p,q)}(B\times \{q\})$ and by $V$ the projection onto the vertical subspace $T_{(p,q)}(\{p\}\times F)$. Both projections induce the notion of lifting, to $M$ as follows: if $X\in T_{p}(B)$ and $q\in F$ then the lift $\widetilde{X}$ of $X$ to $(p,q)$ is the unique vector in $T_{(p,q)}(B)$ such that $d\pi(\widetilde{X})=X$ and $d\sigma(\widetilde{X})=0$. If $X \in \mathfrak{X}(B)$ the lift  $\widetilde{X}$ to $B \times F$ is the vector field $\widetilde{X}$ whose value at each $(p,q)$ is the lift of $X_{p}$ to  $(p,q)$. Coordinate systems on the product manifold shows that $\widetilde{X}$ is smooth. 
The set of all such horizontal lifts $\widetilde{X}$ is denoted by $\mathfrak{L}(B)$. Functions, tangent vectors, vector fields, and tensors on $F$ can also be lifted to $B \times F$ in the same way using the projection $\sigma$. In this way, we also get  vertical lifts of vectors fields in $B\times F$, denoted by $\mathfrak{L}(F)$. In particular, if $V \in \mathfrak{X}(F)$ the lifting is the unique $\widetilde{V}$ such that $d\pi(\widetilde{V})=0$ and $d\sigma(\widetilde{V})=V$. Additionally, we denote the horizontal lift on $B \times F$ of a vector field $X \in TB$ by $X^{H}$, and the vertical lift on $B \times F$ of a vector field $U \in TF$ by $U^{V}$.\\

In particular, as a consequence of the liftings we have the properties on the commutator:

\begin{align*}
    [X^{H}, Y^{H}]&=[X, Y]^{H} \in \mathfrak{L}(B)\\
    [U^{V}, V^{V}]&=[U, V]^{V} \in \mathfrak{L}(F)\\
     [X^{H},V^{V}]& =0.
\end{align*}

anoher important notion in this geoemtry, is the {\bf gradient vector field} of a function $\phi$. This is the unique vector field $\mathrm{grad}(\phi)$ such that $g(\mathrm{grd}(\phi),\cdot)=d\phi.$ Liftings of gradient vector fields  are
and on the gradient vector field:
\begin{lemma}\cite[Lemma 34]{oneil}
If $h\in C^{\infty}(B)$, then the gradient of the lift $h \circ \pi$ of $h$ to $M=B \times_{f} F$ is the lift to $M$ of the gradient of $h$ on $B$. In addition we get the following relation:
$$\mathrm{grad} (\phi)^H +\frac{1}{\tilde{f}^2}\mathrm{grad}(\psi)^V=\mathrm{grad}(\pi^*\phi+\sigma^*\psi)$$
for any $\phi\in C^\infty (B)$ and $\psi\in C^\infty (F)$.
\end{lemma}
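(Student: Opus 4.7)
The plan is to verify both statements directly from the defining property of the gradient, namely $g_f(\mathrm{grad}(\cdot),Z)=d(\cdot)(Z)$ for every $Z\in\Gamma(TM)$, using the fact that any vector field on $M=B\times_f F$ can be decomposed locally into a horizontal lift $X^H$ and a vertical lift $V^V$. By uniqueness of the gradient, it suffices to match both sides on horizontal and vertical test fields separately.

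The preliminary ingredient is the pairing table for $g_f=\pi^*(g_B)+\tilde f^{\,2}\sigma^*(g_F)$ on lifted fields. From $d\pi(X^H)=X$, $d\pi(V^V)=0$, $d\sigma(X^H)=0$, $d\sigma(V^V)=V$, one reads off
\begin{equation*}
 g_f(X^H,Y^H)=g_B(X,Y)\circ\pi,\qquad g_f(X^H,V^V)=0,\qquad g_f(V^V,W^V)=\tilde f^{\,2}\,g_F(V,W)\circ\sigma.
\end{equation*}
A second useful observation is that $h\circ\pi$ is constant along fibers and $\psi\circ\sigma$ is constant along leaves, so $V^V(\pi^*h)=0$ and $X^H(\sigma^*\psi)=0$ for any $h\in C^\infty(B)$, $\psi\in C^\infty(F)$, $X\in\mathfrak{X}(B)$, $V\in\mathfrak{X}(F)$.

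For the first statement, I would test the candidate $(\mathrm{grad}_B h)^H$ against both types of lifts: pairing with $X^H$ gives $g_B(\mathrm{grad}_B h,X)=X(h)=X^H(h\circ\pi)$, while pairing with $V^V$ gives $0=V^V(h\circ\pi)$. Both equal $d(h\circ\pi)$ on the respective argument, so by uniqueness $\mathrm{grad}(h\circ\pi)=(\mathrm{grad}_B h)^H$. For the displayed identity, I would evaluate $g_f\bigl(\mathrm{grad}(\phi)^H+\tilde f^{-2}\mathrm{grad}(\psi)^V,Z\bigr)$ on $Z=X^H$ and $Z=V^V$ in turn: against $X^H$ the vertical term vanishes and the horizontal term yields $d\phi(X)=X^H(\pi^*\phi)=d(\pi^*\phi+\sigma^*\psi)(X^H)$ (the $\sigma^*\psi$ piece being killed by horizontality); against $V^V$ the horizontal term vanishes and the factor $\tilde f^{-2}$ cancels against the $\tilde f^{\,2}$ in $g_f(\cdot^V,\cdot^V)$, producing $d\psi(V)=d(\pi^*\phi+\sigma^*\psi)(V^V)$.

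There is no real obstacle here beyond bookkeeping: the whole argument is an exercise in how the warped metric decouples horizontal and vertical lifts, and how the conformal factor $\tilde f^{\,2}$ appearing on the fiber side is exactly canceled by the $\tilde f^{-2}$ rescaling in the definition of the vertical component of the gradient. The only point that requires attention is making sure one is taking the gradients on $B$ and $F$ with respect to $g_B$ and $g_F$ respectively, not with respect to $g_f$.
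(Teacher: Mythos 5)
Your proof is correct and follows essentially the same route as the paper (which simply cites O'Neill, Lemma~34, where the argument is exactly this): test the candidate field against horizontal and vertical lifts, use the pairing table for $g_f$ together with $V^V(\pi^*h)=0$ and $X^H(\sigma^*\psi)=0$, and conclude by nondegeneracy of the metric. The only cosmetic point is that a general vector field on $M$ need not be a sum of a single horizontal and a single vertical lift; what you actually use---and all that is needed---is that lifted fields span each tangent space $T_{(p,q)}M\cong T_pB\oplus T_qF$ pointwise.
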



We now proceed with a fundamental construction to warped products, the lifting of the Levi-Civita connection $\lf$ to $M=B\times_{f} F$.
Details of the proof can be found at Proposition~35\cite{oneil}.

\begin{proposition}\label{prop:lift-conn}\cite[Proposition 35]{oneil}
On the warped manifold $M=B \times_{f} F$, if $X^{H}, Y^{H} \in \mathfrak{L}(B)$, $U^{V}, V^{V} \in \mathfrak{L}(F)$, for vector fields $X, Y$ in $B$ and $U, V$ in $F$, the Levi-Civita connections $\lB$ of $B$, $\lF$ of $F$ and $\lf$ of $M$, are related by:

\begin{itemize}
    \item [a.] $\pi_{*}(\lf_{X^{H}}Y^{H})=(\lB_{X}Y)^{H}$ y $\sigma_{*}(\lf_{X^{H}}Y^{H})=0$.
    \item[b.] $\lf_{X^{H}}U^{V}=\lf_{U^{V}}X^{H}=(\frac{X f}{f})U^{V}$. 
    \item[c.] $\pi_{*}(\lf_{U^{V}}V^{V})=-(\frac{g_f(U^{V},V^{V})}{f})grad (f)=-f g_F(U^{V},V^{V}) grad (f)$.
    \item[d.] $\sigma_*(\lf_{U^{V}}V^{V})=(\lF_{U}V)^{V}$.
\end{itemize}   
\end{proposition}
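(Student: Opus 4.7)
The strategy is to invoke the Koszul characterization: the Levi-Civita connection $\lf$ is the unique torsion-free connection satisfying
\begin{align*}
2g_f(\lf_{A} B, C) ={}& A\, g_f(B,C) + B\, g_f(A,C) - C\, g_f(A,B) \\
& + g_f([A,B],C) - g_f([A,C],B) - g_f([B,C],A).
\end{align*}
Since horizontal and vertical lifts span $TM$ pointwise and connections are tensorial in the lower argument, it suffices to verify the four formulas by testing Koszul on lifted fields. I would first record the inner product identities $g_f(X^H,Y^H)=g_B(X,Y)\circ\pi$, $g_f(X^H,U^V)=0$, $g_f(U^V,V^V)=\tilde f^2\, g_F(U,V)\circ\sigma$, the bracket relations already stated in the excerpt, and the derivation rules $X^H(\phi\circ\pi)=(X\phi)\circ\pi$, $U^V(\psi\circ\sigma)=(U\psi)\circ\sigma$, together with the crucial annihilation fact that $U^V$ kills any $\pi$-pullback.

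For (a), test Koszul with $(X^H,Y^H)$ against $Z^H$: every term reduces to the corresponding term of the Koszul formula on $B$ and reproduces $g_B(\lB_X Y,Z)\circ\pi$. Testing against $W^V$, all six terms vanish because horizontal-horizontal inner products are $\pi$-pullbacks (killed by $W^V$) and the brackets $[X^H,W^V]$, $[Y^H,W^V]$ are zero while $[X^H,Y^H]=[X,Y]^H$ pairs trivially with $W^V$. This gives both assertions of (a). For (b), applying Koszul to $(X^H,U^V)$ against $Z^H$ yields zero, so $\lf_{X^H}U^V$ is purely vertical; against $W^V$ only the term $X^H g_f(U^V,W^V)=2\tilde f(Xf)\,g_F(U,W)\circ\sigma$ survives, and comparing with $2g_f(U^V,W^V)=2\tilde f^2 g_F(U,W)\circ\sigma$ reads off $\lf_{X^H}U^V=(Xf/f)\,U^V$. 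Combined with $[X^H,U^V]=0$ and torsion-freeness this forces $\lf_{U^V}X^H=\lf_{X^H}U^V$.

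For (c) and (d) one applies Koszul to $(U^V,V^V)$. Testing against $Z^H$, only the term $-Z^H g_f(U^V,V^V)=-2\tilde f(Zf)\,g_F(U,V)\circ\sigma$ survives, and matching this against $g_f\bigl(-fg_F(U,V)\,\mathrm{grad}(f)^H,Z^H\bigr)=-fg_F(U,V)\,(Zf)$ yields (c). Testing against $W^V$, the three derivative terms and three bracket terms assemble, after factoring out $\tilde f^2$, into exactly the Koszul expression for $2g_F(\lF_U V,W)$ on $F$, so the vertical component of $\lf_{U^V}V^V$ equals $(\lF_U V)^V$, giving (d).

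The main technical obstacle is the book-keeping of how $\tilde f$ interacts with the two types of derivatives: $X^H$ differentiates $\tilde f^2$ via the chain rule, producing the factor $2\tilde f(Xf)$ that is the sole source of the warping corrections in (b) and (c), whereas $U^V$ annihilates $\tilde f$ entirely and hence introduces no warping into the purely vertical formula (d). Cleanly separating these two regimes is what turns a routine application of the Koszul formula into the four asymmetric identities, and the torsion-free and metric conditions guarantee uniqueness so no independent verification of these axioms for a candidate connection is needed.
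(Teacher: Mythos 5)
Your proposal is correct and coincides in essence with the proof the paper relies on: the paper does not prove this proposition itself but defers to O'Neill's Proposition~35, whose argument is exactly this Koszul-formula computation, testing lifted fields against horizontal and vertical directions using the pullback identities for $g_f$, the bracket relations, and the fact that vertical lifts annihilate $\pi$-pullbacks. All four cases in your verification check out, including the warping factor book-keeping in (b) and (c).
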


Note that the last two items can be equivalently expressed as:

\begin{equation*}
    \lf_{U^{V}}V^{V}=-(\frac{g_f( U^{V}, V^{V})}{f})grad (f)+(\lF_{U}V)^{V}.
\end{equation*}

\begin{rmk}
    The same construction works for the liftings of any connection on $B$ and $F$. This will be useful when we consider the dual structures in next section.
\end{rmk}

Remember, a geodesic in a Riemannian manifold $M$ is a curve $\gamma: I \longrightarrow M$ whose vector field $\gamma'$ is {\it self-parallel}, i.e., $\nabla_{\gamma'}\gamma'=0$. 
The interpretation for the geodesic curves in warped manifold is presented in the following proposition, the details of the proof can be found at \cite[Proposition 38]{oneil}.

\begin{proposition}
    A curve $\gamma(s)=(\alpha(s), \beta(s))$ in $M=B\times_{f} F$, with $\alpha$ and $\beta$ the projections of $\gamma$ into $B$ and $F$, respectively. $\gamma(s)$ is a geodesic if and only if:
    \begin{itemize}
        \item[a.] $\nabla_{\alpha'}\alpha'=(\beta',\beta')(f \circ \alpha) grad(f)$ $\in B$.
        \item[b.] $\nabla_{\beta'}\beta'=\frac{-2}{f \circ \alpha} \frac{d(f \circ \alpha)}{ds}\beta'$ $\in F$.
    \end{itemize}
\end{proposition}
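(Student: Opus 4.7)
The plan is to write the tangent vector $\gamma'(s)$ as a sum of its horizontal and vertical parts and then apply Proposition~\ref{prop:lift-conn} term by term. Explicitly, along the curve we have $\gamma'=\widetilde{\alpha'}+\widetilde{\beta'}$, where $\widetilde{\alpha'}$ is the horizontal lift of $\alpha'\in T_{\alpha(s)}B$ and $\widetilde{\beta'}$ is the vertical lift of $\beta'\in T_{\beta(s)}F$. Using the $\R$-bilinearity of $\lf$, the geodesic equation $\lf_{\gamma'}\gamma'=0$ becomes
\begin{equation*}
\lf_{\widetilde{\alpha'}}\widetilde{\alpha'}+\lf_{\widetilde{\alpha'}}\widetilde{\beta'}+\lf_{\widetilde{\beta'}}\widetilde{\alpha'}+\lf_{\widetilde{\beta'}}\widetilde{\beta'}=0.
\end{equation*}
Although $\widetilde{\alpha'}$ and $\widetilde{\beta'}$ are only defined along $\gamma$, the formulas of Proposition~\ref{prop:lift-conn} apply termwise in the usual way (by extending locally in charts and restricting back to the curve).

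Next, I would apply each item of Proposition~\ref{prop:lift-conn}: item (a) sends the first term to $(\lB_{\alpha'}\alpha')^H$, which is purely horizontal; item (b) sends each of the two middle mixed terms to $\tfrac{\alpha'(f)}{f}\widetilde{\beta'}$, hence purely vertical; and items (c)--(d) split the last term as
\begin{equation*}
\lf_{\widetilde{\beta'}}\widetilde{\beta'}=-f\,g_F(\beta',\beta')\,\widetilde{\mathrm{grad}(f)}+(\lF_{\beta'}\beta')^V,
\end{equation*}
with the first summand horizontal and the second vertical. Collecting horizontal and vertical components separately, the geodesic equation splits into two decoupled equations: a horizontal one in $TB$ and a vertical one in $TF$.

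The horizontal component yields
\begin{equation*}
\lB_{\alpha'}\alpha'=f(\alpha(s))\,g_F(\beta',\beta')\,\mathrm{grad}(f),
\end{equation*}
which is statement (a) (up to the sign convention used by the authors). The vertical component gives
\begin{equation*}
2\tfrac{\alpha'(f)}{f}\widetilde{\beta'}+(\lF_{\beta'}\beta')^V=0,
\end{equation*}
and since $\alpha'(f)=\frac{d(f\circ\alpha)}{ds}$, this is precisely statement (b). Conversely, if (a) and (b) both hold, running the computation backwards shows $\lf_{\gamma'}\gamma'=0$, which gives the ``if'' direction.

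The only delicate step is the bookkeeping that ensures Proposition~\ref{prop:lift-conn}, which is stated for honest vector fields on $B$ and $F$, transfers to vector fields along $\gamma$; this is routine because both connections are local operators, so one extends $\alpha'$ and $\beta'$ to local vector fields near $\alpha(s_0)$ and $\beta(s_0)$ and then restricts. No further obstacle is expected, since the splitting of $TM$ into horizontal and vertical distributions is orthogonal by definition of the warped metric, so setting horizontal and vertical components of $\lf_{\gamma'}\gamma'$ to zero independently is legitimate.
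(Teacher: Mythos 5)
Your route is the standard one behind this statement: the paper itself gives no proof, deferring entirely to \cite[Proposition~38]{oneil}, and your decomposition $\gamma'=\widetilde{\alpha'}+\widetilde{\beta'}$ followed by a term-by-term application of Proposition~\ref{prop:lift-conn} and orthogonal separation of components is precisely that argument. The algebra is right, including the signs: the minus sign in item (c) crosses the equation, so the horizontal component gives statement (a) exactly, with $(\beta',\beta')$ read as $g_F(\beta',\beta')$, making your hedge ``up to the sign convention'' unnecessary; the vertical component gives (b) via $\alpha'(f)=\frac{d(f\circ\alpha)}{ds}$.

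The step you dismiss as routine, however, is the only place where the proof can genuinely fail, and your proposed fix does not work as stated. One cannot in general ``extend $\alpha'$ to a local vector field near $\alpha(s_0)$'': if $\alpha'(s_0)=0$ while $\alpha'$ is not identically zero near $s_0$, the base curve reaches $\alpha(s_0)$ and turns back, traversing the same points of $B$ with different velocities, so no vector field $X$ on $B$ satisfies $X\circ\alpha=\alpha'$ on an interval around $s_0$. Locality of the connection does not help, because the covariant derivative of $\gamma'$ along $\gamma$ at $s_0$ depends on $\gamma'$ over an interval, not only at the single parameter $s_0$. Nor is this a pathology: the hyperbolic half-plane of Example~\ref{ej:1-normal} becomes a warped product after a change of the $\sigma$-coordinate, and each of its semicircular geodesics has base velocity that vanishes and reverses at the apex, so the extension you invoke does not exist exactly at such points. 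Two standard repairs: (i) apply Proposition~\ref{prop:lift-conn} to lifted coordinate frames $\partial/\partial x^i\in\mathfrak{L}(B)$ and $\partial/\partial y^a\in\mathfrak{L}(F)$ to read off the Christoffel symbols of $g_f$ in a product chart, then verify (a) and (b) from the coordinate formula for the covariant derivative along a curve, which never requires extending $\gamma'$ at all; or (ii) run your argument on the open dense set of parameters where each of $\alpha$, $\beta$ is either regular or locally constant (there the extensions do exist), and conclude for all $s$ by continuity of both sides of (a), (b) and of the geodesic equation. With either patch, both directions of your proof go through.
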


In the case of a fixed point $\beta_0\in F$, the curve $\gamma(s)=(\alpha(s), \beta_{0})$ in $B \times_{f}F$ is a geodesic if and only if $\alpha(s)$ is geodesic in $B$. In an analogous way, the curve  $\gamma(s)=(\alpha_{0}, \beta(s))$ in $B \times_{f}F$ (with fixed point $\alpha_0$ in $B$) is geodesic if and only if $\beta$ is geodesic in $F$. However, it is not true that $\gamma=(\alpha(s), \beta(s))$ is geodesic if and only if $\alpha$ and $\beta$ are geodesics in $B$ and $F$, respectively.

\section{Statistical model and its structure}

The purpose of this section is to define the statistical model and its structure, based on differential manifolds whose points are probability distributions. First, we construct manifolds as finite sets of signed measurements, which statistically correspond to the sample space of any particular event. Second, we define the Fisher metric on tangent vectors of the manifold as an inner product. Considering the above, we define a statistical model and spacial classes of families to study its geometric structure. For more details, associated concepts, and application we refer to \cite{amariMeth} and \cite{jost} and reference therein.

\subsection{Finite sample space}
A random process formally consists of a probability space $(\Omega, S, p)$, where $\Omega$ is the sample space obtained of all the possible results of a randomized experiment in the data collection, $S$ is the collection of events to be studied, i.e., a subset of the sample space and $p$ is a function that returns the probability of an event.\\

From a particular geometrical perspective \cite{jost}, we will determine the probability space for finite sample spaces, i.e., $\Omega=I$ where $I=\{1,2,..., n\}$. 
From linear algebra, it is known that $S(I)$ is a vector space (as the space of real-valued function from $I$, and with dual space $S(I)^*$), that can be identify with the linear forms $\mu: S(I)^*\longrightarrow \mathbb{R}$, which in canonical dual basis $\{\delta^{1}, \cdots \delta^{n} \}$, is written as: 

\begin{equation}\label{mu}
\mu=\sum_{i\in I} \mu_i \delta^{i}, 
\end{equation}
where each element $\delta^{i}$ is the dual  covector, that is $\delta^i(j)=\delta_{ij}$. Indeed, the election of this basis allows us to define the identification (as smooth manifold) $\psi: S(I) \longrightarrow \mathbb{R}^{n}$  by the coordinate map:

\begin{equation*}
     \psi(\mu)=\psi(\sum_{i\in I} \mu_i \delta^{i})=(\mu_{1},\cdots, \mu_{n}).
 \end{equation*}
In other words, this is the coordinate function of $S(I)$ which endows a differentiable structure for $S(I)$ with local model $\mathbb{R}^n$. 
%

A first goal is to  endow with a geometry the space of probability. Let us begin with the open submanifold $\mathcal{M}(I)$ of $ S(I)$, as the {\it positive measures} on $I$:
\begin{equation*}
    \mathcal{M}_+(I):=\{\mu \in S(I):\mu_i > 0, \forall i\in I \},
\end{equation*}
 whose closure is given by the {\it non-negative measures}:
\begin{equation*}
    \mathcal{M}(I):=\{\mu \in S(I):\mu_i \geq 0, \forall i\in I \},
\end{equation*}
as {\it manifold with corners}. From a regular value of the map  $\phi: S(I) \longrightarrow \mathbb{R}$ (defined by $\phi(\mu)=\sum_{i=1}^{n}\mu_{i}$), we get: 
\begin{equation}\label{simplex}
\mathcal{P}_{+}(I):=\mathcal{M}_{+}(I) \cap \phi^{-1}(1)=\{\mu \in \mathcal{P}(I):\mu_i > 0, \forall i\in I, \sum_{i\in I}\mu_i=1 \},   
\end{equation}
as level set on the open submanifold $\mathcal{M}_{+}(I)$. And also, we have the closure of $\mathcal{P}_{+}(I)$, as:
\begin{equation*}
     \mathcal{P}(I):=\{\mu \in \mathcal{M}(I):\mu_i \geq 0, \forall i\in I, \sum_{i\in I}\mu_i=1 \}.
\end{equation*}

It worth to mention that, as $\phi^{-1}(1)$ is a level set, then $\mathcal{P}_{+}(I)$ decrease in 1 the dimension of $\mathcal{M}_{+}(I)$.

\begin{example}\label{ejem1}
For the set of natural numbers $I=\{1, \cdots , n, n+1\}$ let us consider: 
\begin{equation*}
    \textbf{U}:=\{x=(x_1, \cdots, x_n)\in \mathbb{R}^n: x_i>0,  \forall i \in I,  \sum_{i=1}^{n}x_i< 1\},
\end{equation*}
and the application $\varphi(x):\textbf{U} \longrightarrow \mathcal{P}_{+}(I),$ by:
\begin{equation}\label{coor}
   \varphi(x)=\sum_{i=1}^{n}x_i\delta^{i}+(1-\sum_{i=1}^{n}x_i)\delta^{n+1},
\end{equation}
that defines the smooth structure for $\mathcal{P}_{+}(I)$, and gives us explicit coordinates system.
$\diamond$
\end{example}

\subsection{Fisher metric and statistical model}

The goal of this section is to introduce an inner product that considers the pointwise data in $\mathcal{M}_{+}(I)$. This idea will be promoted to a metric on the whole manifold $\mathcal{M}_{+}(I)$. For this, we will begin with an inner product that will depend on each element $\mu$ of the set of positive measures on $I$:
\begin{definition}[\cite{jost}]\label{productointerno}
  Let us fix $\mu\in \mathcal{M_{+}}(I)$ and define the inner product $\langle \cdot,\cdot \rangle_{\mu}$ in $S(I)$ as follows:  
\begin{equation}\label{eq4}
    \langle f, g \rangle_{\mu}=\mu \cdot (fg)=\sum_{i=1}^n\mu_if_ig_i
\end{equation}
for any $f,g\in S(I)^*$.
\end{definition}

It is well known that $S(I)$ and $S(I)^*$ are canonical isomorphic, but we also can obtain a family of isomorphisms parametrized by $\mathcal{M_{+}}(I)$. Just note that when consider the basis  $\{e_i\}$ and  $\{\delta^i\}$ on $S(I)^*$ and $S(I)$ respectively, the element $\mu \in \mathcal{M}_{+}(I)$ (with representation $\mu=\sum_{i\in I}\mu_{i}\delta^{i}$) induces an isomorphism between $S(I) $and $ S(I)^{*}$ by  $\frac{da}{d\mu}:=\sum_{i\in I}\frac{a_{i}}{\mu_{i}}e_{i}$ in $S(I)^*$. Thus, in the definition \ref{productointerno} we rewrite the relation \eqref{eq4} in $S(I)$ by:
\begin{equation}\label{metr}
    \langle a, b \rangle_{\mu}=\langle \dfrac{da}{d\mu}, \dfrac{db}{d\mu} \rangle_{\mu}=\sum_{i}\frac{1}{\mu_{i}}a_{i}b_{i}
\end{equation}

with $a,b\in S(I)$.\\      

In order to promote the inner product ~\eqref{metr} to a Riemannian metric on $\mathcal{M}_{+}(I)$ we first define its tangent space. It is a straightforward computation to verify that tangent space of a vector space (seen as a manifold) is again the same vector space. In our scenario, we get the following:
\begin{equation}\label{TS}
    T_{\mu}S(I) \cong \{\mu\} \times S(I), \quad \mbox{\ and\ } \quad T_{\mu}\mathcal{M}_{+}(I) \cong \{\mu\} \times S(I).
\end{equation}

\begin{definition}\cite[Definition 2.1]{jost}\label{defmodeloesta}
 \textbf{The Fisher metric} (or metric tensor $\mathbf{g}$) on $\mathcal{M}_{+}(I)$ is defined on each $\mu \in \mathcal{M}_{+}(I)$ by $\mathbf{g}_{\mu}: T_{\mu}\mathcal{M}_{+}(I) \times T_{\mu}\mathcal{M}_{+}(I) \longrightarrow \mathbb{R} $ such that, for two tangent vectors $A\sim (\mu,a),B\sim (\mu,b)\in T_\mu \mathcal{M}_{+}(I)$
\begin{equation}\label{eq7}
   \mathbf{g}_{\mu}(A, B):= \langle a, b \rangle_{\mu}.
\end{equation}

A \textbf{statistical model} for a $n$-dimensional manifold $M$ is a pair $(g,p)$, with $g$ a Riemanninan metric in $M$ and an embedding $p:M \hookrightarrow \mathcal{M}_{+}(I)$ ($\xi\in M\mapsto {p}(\xi)=\sum_{i\in I} p_{i}(\xi)\delta^{i}$), such that the pull-back of the Fisher metric coincides with $g$,
i.e. for $X, Y \in T_{\xi}M$    
\begin{equation}\label{metrifisherpullback}
  g_{\xi}(X,Y)=\mathbf{g}_{{p}(\xi)}(dp_{\xi}X,dp_{\xi}Y).
\end{equation}
\end{definition}

The main example of statistical model is the space $M=\mathcal{P}_+(I)$ with the natural embedding on $\mathcal{M}_+(I)$.

In particular, if this manifold is composed of points of probability, it gives rise in statistics to Fisher information matrix.\footnote{Indicates how much information about an unknown parameter we can obtain in a sample space.} To illustrate the above, we take the manifold $\mathcal{P}_{+}(I)$ of example \ref{ejem1}, obtaining the respective information matrix associated with Fisher metric:

\begin{example}\label{ej:F.mezcla}
    
Consider the coordinate system~\eqref{coor} of $\mathcal{P}_{+}(I)$. The coefficients of the information matrix associated with Fisher's metric are given by:
\begin{equation}\label{Coef-fisher-simplejo}
    g_{ij}(\mu)=\sum_{k=1}^{n}\frac{1}{\mu_k}\delta_{ki}\delta_{kj}+\frac{1}{\mu_{n+1}}=
\begin{cases}
\frac{1}{\mu_i}+\frac{1}{\mu_{n+1}}
& \mbox {si i=j}\\
\frac{1}{\mu_{n+1}} & \mbox{en otro caso}
\end{cases}    
\end{equation}
Explicitly we have:

\begin{equation*}
    G(\mu):=(g_{ij}(\mu))=\frac{1}{\mu_{n+1}} \begin{pmatrix}
        \frac{\mu_{n+1}}{\mu_{1}}+1  &  1 & \dots & 1\\
        1 &  \frac{\mu_{n+1}}{\mu_{2}}+1 &\dots & 1\\
        \vdots & \vdots & \ddots & \vdots\\
        1 & 1 & \dots & \frac{\mu_{n+1}}{\mu_{n}}+1\\
    \end{pmatrix}
\end{equation*}
The inverse matrix of $G(\mu)$ is the probability covariance matrix of $\mu$, each element $i\in{1, \dots n}$ has a probability $\mu_{i}$, the coefficients are:
$$g^{ij}(\mu)=\begin{cases}
\mu_{i}(1-\mu_{i})
& \mbox {si i=j}\\
-\mu_{i}\mu_{j} & \mbox{en otro caso}
\end{cases}$$

In statistics, the diagonal of the matrix $G^{-1}(\mu)$, are the values of the variances of the variables and the remaining coefficients give the value of the correlation between the variables.
In fact, this is the statistical origin of the Fisher metric as a covariance matrix \cite{rao}.$\diamond$


\end{example}

\begin{example}
In the case of $I=\{1,2\}$, we choose two points $\nu=(1/3,1/3,1/3)$ and $\mu=(0.12,0.08,0.80)$ for which the Figure \ref{fig:p+}. shows the centered balls (for the Fisher metric) with same radio in $\mathcal{P}_+(I)$.


\begin{figure}[h]
    \centering
    \includegraphics[scale=1]{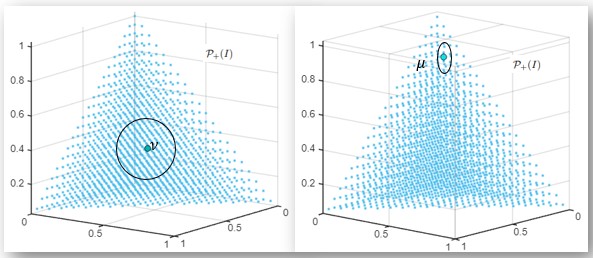}
    \caption{Balls in Fisher metric centered at $\nu$ and $\mu$ with same radii}
    \label{fig:p+}
\end{figure}
 It worth to note that the meaning of the size of the ball is the quantity of information at each point. To clarify this claim, note that the third coordinate in $\mu$ is closed to 1, the other two coordinates must be closed to 0, but for $\nu$ this data is homogeneous. For this reason the ball centered at $\mu$  will have more restriction that the one centered at $\nu$, i.e, we get more information on $\mu$.
$\diamond$
\end{example}


In order to find the expression of $g$, it is sufficient to take the canonical directions of tangent $X=\frac{\partial p_i}{\partial \xi_l}, Y=\frac{\partial p_i}{\partial \xi_j}$ in $\mathcal{M}_{+}(I)$, to obtain:
 
$${\mathbf{g}}_{\xi}(X,Y) =\sum_{i\in I} \frac{1}{p_{i}(\xi)}\frac{\partial p_{i}}{\partial \xi_{l}}(\xi) \frac{\partial p_{i}}{\partial \xi_{j}}(\xi).$$

Now, applying the logarithmic derivative the Fisher metric is rewritten as: 

$${\mathbf{g}}_{\xi}(X,Y) =\sum_{i \in I} p_{i}(\xi) \frac{\partial \log p_{i}}{\partial \xi_{l}}(\xi) \frac{\partial \log p_{i}}{\partial \xi_{j}}(\xi).$$

This representation of the Fisher metric is more familiar in the standard treatment of information geometry, using the definition of expected value: 
\begin{equation}\label{metricafisheresperanza}
    {\mathbf{g}}_{\xi}(X,Y):=E[\frac{\partial \log p_{i}}{\partial \xi_{l}}(\xi) \frac{\partial \log p_{i}}{\partial \xi_{j}}(\xi)]
\end{equation}

Using again the notion of logarithmic derivative but on the statistical model  $\mathcal{P}_+(I)$, we obtain:
$$\sum_{i \in I} p_i\frac{\partial \log p_{i}}{\partial \xi_{l}}=\sum_{i \in I} \frac{\partial p_{i}}{\partial \xi_{l}}=\frac{\partial }{\partial \xi_{l}}\sum_{i \in I} p_i=0,$$ therefore we obtain:
\begin{align*}
0=\frac{\partial }{\partial \xi_{l}} E[\frac{\partial }{\partial \xi_{j}}\log p_i]&= \sum_{i} \frac{\partial }{\partial \xi_{l}}(p_i\frac{\partial }{\partial \xi_{j}}\log p_i)= \sum_{i} ((\frac{\partial }{\partial \xi_{l}}p_i)\frac{\partial }{\partial \xi_{j}}\log p_i)+(p_i\frac{\partial }{\partial \xi_{l}}\frac{\partial }{\partial \xi_{j}}\log p_i)\\
&=\sum_{i} (p_i(\frac{\partial \log p_i}{\partial \xi_{l}})\frac{\partial \log p_i}{\partial \xi_{j}})+(p_i\frac{\partial }{\partial \xi_{l}}\frac{\partial }{\partial \xi_{j}}\log p_i)={{g}}_{\xi}(X,Y)+\sum (p_i\frac{\partial }{\partial \xi_{l}}\frac{\partial }{\partial \xi_{j}}\log p_i),
\end{align*}

which finally yields another equivalent way to define the Fisher metric but on $\mathcal{P}_+(I)$:

\begin{equation}\label{metricafisheresperanza-1}
    g_{\xi}(X,Y):=-E[\frac{\partial^2 \log p_{i}}{\partial \xi_{l} \partial \xi_{j}}(\xi) ].
\end{equation}



\begin{rmk}\label{rmk:fin vs infn}
    
As is usual in probability theory, under suitable considerations, everything that has been done so far with finite $I$ can be extended in the same way to measurable spaces with probability measures, where the set $I$ is not necessarily finite, but is endowed with a $\sigma$-algebra and is a measurable space (for more details we refer to  \cite{jost} Section 3.2.1). In the not-finite case, we should obtain again $\mathcal{M}_+(I)$ and $\mathcal{P}_+(I)$ finite dimensional manifolds. This will be done by describing the variable $x$ (in the measurable space) with some finite number of parameters that will play the role of coordinate system. Hereafter, we will have this consideration for the Fisher metric (and associated geometric terms). Particular examples are {\it exponential families} and also {\it mixture families} studied in the following section.

\end{rmk}

In the following example, we consider a space of distributions parameterized by two terms, where we change the finite sum by integration in the definition of the Fisher metric:

\begin{example}\label{ej:1-normal}
Consider the normal distribution 
$x$ as real random variable with  $\xi=(\mu, \sigma)$ where the parameters are the mean $\mu$  and the standard deviation $\sigma$, that is:
\begin{equation}\label{Normal}
    \mathcal{N}(x;\mu, \sigma)=\frac{1}{\sqrt{2\pi \sigma^2}}\exp (-\frac{(x-\mu)^2}{2\sigma^2}).
\end{equation}
In order to apply the formulae for the Fisher metric \eqref{metricafisheresperanza-1} in this case with, we should note that  
$$\ln {\mathcal{N}(x;\mu, \sigma)}=-\frac{(x-\mu)^2}{2\sigma^2}-\frac{1}{2}\ln(2\pi \sigma^2).$$ 
When we take partial derivatives and integration with respect to the distribution (in the lines as Remark~\ref{rmk:fin vs infn}), we obtain


\begin{equation}\label{metricanormal-mu-sigma}
    G(\mu, \sigma)=\begin{pmatrix}
\frac{1}{\sigma^{2}} & 0\\
0 & \frac{2}{\sigma^{2}}
\end{pmatrix}.
\end{equation}
For a detailed computation for this metric, we  reefer to \cite[section 1.3]{olga}. $\diamond$
\end{example}

Note that, the previous example says that the statistical model for normal distribution identifies {\it isometrically} with the hyperbolic space:
$$(H:=\{(\mu, \sigma): \mu\in \mathbb{R}, \sigma > 0 \},\frac{d\mu^{2}+2d\sigma^{2}}{\sigma^{2}}).$$
A remarkable geometrical consequence of this identification lies in the distance-minimizing curves in this geometry. Although, in the usual euclidean space the minimizing distance between two normal distributions is a segment of a line in the plane, the Fisher metric is not. Indeed, a geodesic is a section of a semicircle  because these are the geodesic in the hyperbolic plane\footnote{which gives the minimal distance between two normal distributions.},
as shown in Figure \ref{fig:enter-label}:


\begin{figure}[h]
    \centering
\includegraphics[width=0.6\textwidth]{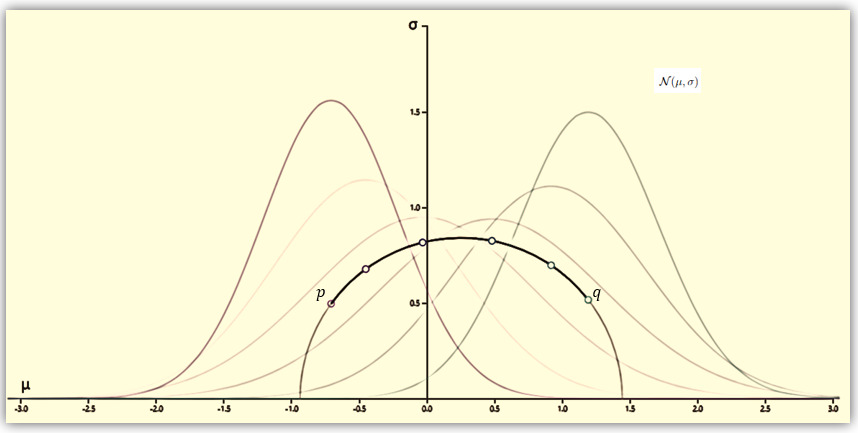}
    \caption{ 
    Shortest path between the normal distributions $p=\mathcal{N}(-0.75,0.5)$ and $q=\mathcal{N}(1.25,0.5)$ in the Fisher metric \cite{Rit} .}
    \label{fig:enter-label}
\end{figure}




\subsection{Exponential and mixture families in geometry}

To take advantage of the notions and concepts presented so far, we will use particular forms of probability distributions that should be considered. From this point of view, we will consider special families that simplify the way we describe such distribution. In the literature, we find two such families, known as exponential and mixture families. Geometrically, these families will specify coordinate systems and connections, which will determine aspects such as geodesics or special directions for derivatives.\\

\textbf{The exponential family} is a broad class of probability distributions that share a common mathematical form. Many well-known distributions, such as the normal, Poisson, binomial, and exponential, belong to this family. A key characteristic of this family is that the distributions can be expressed in a standard form, which yields computational efficiency in classical inference problems. The associated \textit{coordinate system}  to this model is denoted by $(e)$. Formally, it consists on probability distributions of the form:

\begin{definition}\cite[Definition 3.1]{jost}
    An \textbf{exponential family} is a family of probability distributions  $p(\cdot; \vartheta)$ with embedding  ${p} :M \hookrightarrow \mathcal{P}_{+}(\Omega)$, of the form:
    \begin{equation}\label{famiExp}
        p(x; \vartheta)=\exp[\gamma(x)+\sum_{i=1}^{n}f_{i}(x)\vartheta^{i}-\psi(\vartheta)],
        \end{equation}
where, $x$ is a real random variable, $\vartheta=(\vartheta^{1}, \dots, \vartheta^{n})$ is a $n$-dimensional parameter with function $\gamma(x)$, $f_{1}(x), \dots f_{n}(x)$ and  $\psi(\vartheta)$, under the normalized condition $\int_M p(x; \vartheta)dx=1$.   
\end{definition}

From the defining relation \eqref{famiExp} of exponential family, we have $    \log p(x; \vartheta)= \gamma(x)+\sum_{i=1}^{n}f_{i}(x)\vartheta^{i}-\psi(\vartheta)$, which gives us: 
$$ \frac{\partial^2 \log p(x; \vartheta)}{\partial \vartheta^{i}\partial \vartheta^{j}}= -\frac{\partial^2 }{\partial\vartheta^{i} \partial \vartheta^{j}}\psi(\vartheta).$$ As the right hand side is independent of the variable $x$, we get that: 

$$-\int_M \frac{\partial^2 \log p(x; \vartheta)}{\partial \vartheta^{i}\partial \vartheta^{j}} p(x;\vartheta) dx = \frac{\partial^2 }{\partial\vartheta^{i} \partial \vartheta^{j}}\psi(\vartheta)\int_M  p(x;\vartheta) dx,$$

which finally gives us an equivalent version of \eqref{metricafisheresperanza-1} as:

\begin{align}\label{componentesmatrizFisher}
    g_{ij}(p)=-E\Big[ \frac{\partial^2 \log p(x; \vartheta)}{\partial \vartheta^{i}\partial \vartheta^{j}} \Big] &= \frac{\partial^2 }{\partial\vartheta^{i} \partial \vartheta^{j}}\psi(\vartheta),
\end{align}

thus, the function $\psi(\vartheta)=\log \int \exp[\gamma(x)+\sum_{i=1}^{n}f_{i}(x)\vartheta^{i}]dx$  allows us to give an explicit  expression for the components of the information matrix associated to the Fisher metric. Therefore, the exponential family is a statistical model whose Fisher  metric is defined by the coefficients in~\eqref{componentesmatrizFisher}. The example \ref{ej:1-normal} can be revisited in terms of exponential family as  follows:
 
\begin{example}
Rewriting the expression \eqref{Normal}, we get:

\begin{equation*}
    \mathcal{N}(x;\mu, \sigma^2)=\exp(\ln(\frac{1}{\sqrt{2\pi \sigma^2}}))\exp(-\frac{x^2-2\mu x+\mu^2}{2\sigma^2})=\exp(\frac{\mu x}{\sigma^2}-\frac{x^2}{2\sigma^2}-\frac{\mu^2}{2\sigma^2}-\ln(\sqrt{2\pi \sigma^2})),
\end{equation*}
and use a new inner product to obtain
\begin{equation}\label{expNormal}
    \mathcal{N}(x;\mu, \sigma^2)=\exp((x,x^2)\cdot(\frac{\mu}{\sigma^2},-\frac{1}{2\sigma^2})-\frac{\mu^2}{2\sigma^2}-\frac{1}{2}\ln(2\pi\sigma^2)),
\end{equation}
expressed in new variables with the following change of variables $\vartheta^{\textbf{1}}=\frac{\mu}{\sigma^2}$ and $\vartheta^{\textbf{2}}=-\frac{1}{2\sigma^2}$. The normal distribution takes the form \eqref{famiExp} with parameters $\vartheta=(\vartheta^{\textbf{1}}, \vartheta^{\textbf{2}})$ and we note that $\frac{\mu^2}{2\sigma^2}=-\frac{(\vartheta^{\textbf{1}})^2}{4\vartheta^{\textbf{2}}}$ y $\sigma^{2}=-\frac{1}{2\vartheta^{\textbf{2}}}$, which directly implies that:

\begin{equation*}
    \mathcal{N}(x;\vartheta^{\textbf{1}}, \vartheta^{\textbf{2}})=\exp\Big((x,x^2)\cdot(\vartheta^{\textbf{1}}, \vartheta^{\textbf{2}})-\Big(-\frac{(\vartheta^{\textbf{1}})^2}{4\vartheta^{\textbf{2}}}+\frac{1}{2}\ln(\pi)-\frac{1}{2}\ln(-\vartheta^{\textbf{2}})\Big)\Big),
\end{equation*}
where the function $\psi(\vartheta)$ is:
\begin{equation*}
     \psi(\vartheta)=-\frac{(\vartheta^{\textbf{1}})^2}{4\vartheta^{\textbf{2}}}+\frac{1}{2}\ln(\pi)-\frac{1}{2}\ln(-\vartheta^{\textbf{2}}).
\end{equation*}
According to ~\eqref{componentesmatrizFisher}, the components of the Fisher information matrix for the exponential family are calculated with the second partial derivatives in each parameter of $\psi(\vartheta)$ yielding again~\eqref{metricanormal-mu-sigma}.
    $\diamond$
\end{example}

The mixture family is a class of statistical models used when the observed data is thought to be generated by multiple underlying processes, each with its probability distribution. These models are useful for clustering or classifying data into subgroups when the population is not homogeneous.  The associated coordinate system to this model is denoted as $(m)$. Formally, it consists of probability distributions of the form:

\begin{definition}\cite[section 4.2]{jost}
    An \textbf{mixture family} is a  family of probability distributions $p(\cdot; \eta)$ with embedding ${p} :M \hookrightarrow \mathcal{P}_{+}(\Omega)$, of the form:
\begin{equation}\label{FamiliaMezcla}
    p(x;\eta)=c(x)+\sum_{i=1}^{d}h^{i}(x)\eta_{i},
\end{equation}
where $x$  is a real random variable, $\eta=\eta_{1}, \dots, \eta_{d}$ is a parameter $d$-dimensional, $c(x)$ y $h^{1}(x), \dots, h^{d}(x)$ are integrable functions  under normalisation property $\int_M c(x) dx=1$ and $\int_M h^{i}(x)dx=0$
\end{definition}

From the expression \eqref{FamiliaMezcla} we have $\frac{\partial^2}{\partial \eta_{i}\partial \eta_{j}} \log p(x; \eta)=-\frac{h^{i}(x)h^{j}(x)}{[p(x;\eta)]^2}$, which yields (cf. \cite[expression (94)]{frank2020}):
\begin{equation}
  {g_{ij}(p)}=\int_M\frac{h^{i}(x) h^{j}(x)}{p(x; \eta)}dx,
\end{equation}

which corresponds to the coefficients of the matrix associated with the Fisher metric. 

An example of probability distribution grouped in this family, is the collection of probability functions for a finite sample, as was presented in the example \ref{ejem1} and coefficients in \eqref{Coef-fisher-simplejo}.

Once we fix these two families, we proceed to  obtain more geometrical data. One of the possible geometrical notion we can apply is the {\it parallel transport}.

\begin{definition}\cite[section 2.4]{jost}
Let  $\mu$ and $\nu$ be two points in $\mathcal{M}_{+}(I)$, and  $A \sim (\mu, a)$ in $T_{\mu}\mathcal{M}_{+}(I)$ a tangent vector. Denote  ${\Pi}_{\mu,\nu}^{(e)}$ as a parallel transport in $T\mathcal{M}_{+}(I)$ with coordinate system $(e)$, given by:

\begin{align}  \nonumber
{\Pi}_{\mu,\nu}^{(e)}:T_{\mu}\mathcal{M}_{+}(I)&\longrightarrow T_{\nu}\mathcal{M}_{+}(I)\\   \label{defTrans(e)}
a &\mapsto (\nu, (\tilde{\phi}_{\nu}^{-1}\circ \tilde{\phi}_{\mu})(a))=\sum_{i}\nu_{i}\frac{a_i}{\mu_{i}}\delta^{i}. 
\end{align}

Denote  ${\Pi}_{\mu,\nu}^{(m)}$ as a parallel transport in $T\mathcal{M}_{+}(I)$ with coordinate system $(m)$, given by:
\begin{align}\nonumber {\Pi}_{\mu,\nu}^{(m)}:T_{\mu}\mathcal{M}_{+}(I) &\longrightarrow T_{\nu}\mathcal{M}_{+}(I)\\  \label{defTrans(m)}
    a & \longmapsto \sum_{i}a_{i}\delta^{i}=a.
\end{align}
\end{definition}



The remarkable behavior of metric concerning parallel transport in $A \sim (\mu, a)$ and $B \sim (\nu, b)$ is: 
\begin{equation}\label{ProFisheTrans}
       {g}_{\nu}({\Pi}_{\mu,\nu}^{(e)}A, {\Pi}_{\mu,\nu}^{(m)}B)=\sum_{i}\frac{1}{\nu_{i}}(\nu_{i}\frac{a_{i}}{\mu_{i}})b_{i}=\sum_{i}\frac{1}{\mu_{i}}a_{i}b_{i}={g}_{\mu}(A,B), 
\end{equation}

indicating the invariance of the Fisher metric, for the two parallel transports.\\

\begin{rmk}
  An {\bf affine connection} is a $\mathbb{R}$-linear map $\cc:\Gamma(TM)\times \Gamma(TM)\to \Gamma(TM)$ so that for all smooth function $f$ and any pair of vector field $X,Y$ it holds:
  \begin{equation}\label{def:conn}
\nabla_{fX}Y=f\nabla_XY \mbox{\ and\ } \nabla_XfY=(Xf)Y+f\nabla_XY.      
  \end{equation}
Affine connections are the natural extension of directional derivative when we change the metric on the configuration space.   
\end{rmk}

It is a well known fact in differential geometry, that any parallel transport defines an {\it affine connection} which is the global notion associated to derivations, torsion, curvature, and geodesics. Therefore, the parallel transports ${\Pi}_{\mu,\nu}^{(e)}$ and ${\Pi}_{\mu,\nu}^{(m)}$ will determine two types of affine connections, \cite[Proposition 2.4]{jost} as follows: given a curve $\gamma: (-\epsilon,\epsilon) \longrightarrow \mathcal{M}_{+}(I)$ with $\gamma(0)=\mu$ and $\dot{\gamma}(0)=A $, we define \textbf{affine $e$-connection} $\widetilde{\nabla}_{A}^{(e)}B|_{\mu}$ in $\mathcal{M}_{+}(I)$ as:
$$\widetilde{\nabla}_{A}^{(e)}B|_{\mu}=\lim_{t \to 0}\frac{1}{t}({\Pi}_{\gamma(t),\mu}^{(e)}(B_{\gamma(t)})-B)\in T_{\mu}\mathcal{M}_{+}(I),$$

applying \eqref{defTrans(e)} on $B=\sum_{i}b_{\mu, i}\delta^{i}$, we rewrite:
\begin{align*}
    \widetilde{\nabla}_{A}^{(e)}B|_{\mu}&=  (\mu, \lim_{t \to 0}\frac{1}{t}(\sum_{i}\mu_{i}\frac{b_{\gamma(t),i}}{\gamma_{i}(t)}\delta^{i}-\sum_{i} b_{\mu, i}\delta^{i}))\\ & =(\mu, \sum_{i} \mu_{i}\{ \frac{\frac{\partial b_{i}}{\partial a_{\mu}}(\mu) \gamma_{i}(t)- b_{\gamma(t),i}\dot{\gamma}_{i}(t)}{\gamma^{2}(t)} \}_{t=0}\delta^{i}),
\end{align*}
evaluating at $t=0$, with $\gamma_{i}(0)=\mu_{i}$ y $\dot{\gamma}_{i}(0)=a_{\mu,i}$: 
\begin{align*}
      \widetilde{\nabla}_{A}^{(e)}B|_{\mu}= & (\mu, \sum_{i} \mu_{i}\{ \frac{\frac{\partial b_{i}}{\partial a_{\mu}}(\mu) \gamma_{i}(0)- b_{\gamma(0),i}\dot{\gamma}_{i}(0)}{\gamma^{2}(0)} \}\delta^{i}) \\ =&(\mu, \sum_{i}\frac{\partial b_{i}}{\partial a_{\mu}}(\mu) \delta^{i}-\sum_{i} \frac{1}{\mu_{i}}b_{\mu,i}a_{\mu,i}\delta^{i} ),
\end{align*}
by expressions~\eqref{mu} y~\eqref{eq7}, we have:
\begin{align}\label{econexion}
    \widetilde{\nabla}_{A}^{(e)}B|_{\mu} = & (\mu, \frac{\partial b}{\partial a_{\mu}}(\mu)-\mu(\frac{d a_{\mu}}{d \mu} \cdotp \frac{d b_{\mu}}{d \mu}))=(\mu, \frac{\partial b}{\partial a_{\mu}}(\mu)-{g}_{\mu}(A, B)),
\end{align}

similarly for a curve $\gamma:(-\epsilon, \epsilon)\longrightarrow \mathcal{M}_{+}(I)$, $\gamma(0)=\mu$ and $\dot{\gamma}(0)=A$, we define \textbf{affine $m$-connection}
${\widetilde{\nabla}}_{A}^{(m)}B|_{\mu}$ as: $$\widetilde{\nabla}_{A}^{(m)}B|_{\mu}=\lim_{t \to 0}\frac{1}{t}({\Pi}_{\gamma(t),\mu}^{(m)}(B_{\gamma(t)})-B)\in T_{\mu}\mathcal{M}_{+}(I),$$
also, by \eqref{defTrans(m)} in the previous relation, we get: 

\begin{equation}\label{mconexion}
     \widetilde{\nabla}_{A}^{(m)}B|_{\mu} = (\mu,  \lim_{t \to 0} \frac{1}{t} (\sum_{i}b_{\gamma(t),i}\delta^{i}-\sum_{i}b_{\mu,i}\delta^{i}))
     =(\mu, \frac{\partial b}{\partial a_{\mu}}(\mu)),
\end{equation}
as we have that the main model for information geometry is the space of probability measures $\mathcal{P}_+(I)$ with the induced Fisher metric, we must consider connection on $\mathcal{P}_{+}(I)$. For this, we need to project the connections on the tangent space of probability measure as follows: 
\begin{align}\label{eq:m-p+conn}
\nabla^{(m)}_AB&=\widetilde{\nabla}^{(m)}_AB,  \\ \label{eq:e-p+conn}
  \nabla^{(e)}_AB &= (\mu, \sum_{i}\frac{\partial b_{i}}{\partial a_{\mu}}(\mu) \delta^{i}-\sum_{i} \frac{1}{\mu_{i}}b_{\mu,i}a_{\mu,i}\delta^{i} +\sum_{i} {g}_\mu(A_\mu,B_\mu)\mu_i \delta_i ),
\end{align}
where the first projection is the same because $\widetilde{\nabla}^{(m)} $ belong to $T\mathcal{P}_+(I)$ whenever it is evaluated in vector fields of $\mathcal{P}_+(I)$, while the second connection must be projected with the extra term,  $\sum_{i} {g}_\mu(A_\mu,B_\mu)\mu_i \delta_i $. A direct consequence of this definition is that

$$A{g}_\mu(B,C)={g}_\mu(\nabla^{(m)}_AB,C )+{g}_\mu(B,\nabla^{(e)}_AC )$$
  for $A=\dfrac{\partial}{\partial \mu_k},B=\dfrac{\partial}{\partial \mu_l},C=\dfrac{\partial}{\partial \mu_s}$ in $\mathcal{P}_+(I)$. This relation allows us to prove the following property:  

\begin{proposition}\label{conexioneslibresdetorsionmye}
    The connections $\nabla^{(m)}$ and $\nabla^{(e)}$ satisfy the condition:
$$A{g}_\mu(B,C)={g}_\mu(\nabla^{(m)}_AB,C )+{g}_\mu(B,\nabla^{(e)}_AC ),$$    for any $A,B,C$ vector field in $\mathcal{P}_+$. In addition, $\nabla^{(m)}$ and $\nabla^{(e)}$ are torsion-free.
\end{proposition}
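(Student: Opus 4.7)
The duality identity has already been verified in the text for the coordinate vector fields $A=\partial/\partial\mu_k$, $B=\partial/\partial\mu_l$, $C=\partial/\partial\mu_s$ via the computation based on $\partial_l E[\partial_j\log p_i]=0$. My plan is therefore first to promote this identity to all triples of smooth vector fields, and then to verify torsion-freeness directly from the explicit formulas \eqref{eq:m-p+conn} and \eqref{eq:e-p+conn}.

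For the duality part, I would check that both sides of
\[
A\,g_\mu(B,C) \;=\; g_\mu(\nabla^{(m)}_A B,\,C) + g_\mu(B,\,\nabla^{(e)}_A C)
\]
transform identically under multiplication of $A$, $B$, or $C$ by a smooth function $f\in C^{\infty}(\mathcal{P}_+(I))$. The replacement $A\mapsto fA$ multiplies both sides by $f$, since the left side involves a directional derivative and the right side has $\nabla^{(m)}_{fA}=f\nabla^{(m)}_A$ and $\nabla^{(e)}_{fA}=f\nabla^{(e)}_A$ by the $C^\infty$-linearity part of \eqref{def:conn}. The replacement $B\mapsto fB$ generates an extra term $(Af)g_\mu(B,C)$ on both sides: on the left from the product rule for $A$, and on the right from the Leibniz part of \eqref{def:conn} applied to $\nabla^{(m)}_A(fB)$. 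The symmetric check for $C\mapsto fC$ uses the Leibniz rule for $\nabla^{(e)}$. Since the identity holds for coordinate vector fields and is compatible with $C^\infty$-linear expansions in each slot, it extends to all $A,B,C\in\Gamma(T\mathcal{P}_+(I))$.

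For the torsion-free property, I would feed the explicit formulas into $T(A,B)=\nabla_A B-\nabla_B A-[A,B]$. For $\nabla^{(m)}$ the formula \eqref{mconexion} is just the flat directional derivative on the affine space $S(I)$ restricted to $\mathcal{P}_+(I)$, namely $\nabla^{(m)}_A B|_\mu=(\mu,\partial b/\partial a_\mu(\mu))$, so $\nabla^{(m)}_A B-\nabla^{(m)}_B A$ is exactly the Lie bracket of the underlying vector fields; torsion is zero. For $\nabla^{(e)}$ the additional pieces
\[
-\sum_i \tfrac{1}{\mu_i}b_{\mu,i}a_{\mu,i}\,\delta^i \;+\; \sum_i g_\mu(A,B)\mu_i\,\delta^i
\]
are both symmetric under the swap $A\leftrightarrow B$ (the first by direct inspection of the coordinates, the second because $g_\mu$ is symmetric), so they cancel in $\nabla^{(e)}_A B-\nabla^{(e)}_B A$, again leaving only the flat directional-derivative commutator, which equals $[A,B]$.

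The only delicate point is that the computation for $\nabla^{(e)}$ takes place after the projection to $T\mathcal{P}_+(I)$: one must make sure the added normal correction $\sum_i g_\mu(A,B)\mu_i\delta^i$ is indeed symmetric in $(A,B)$ and does not introduce any new antisymmetric contribution, which is immediate from the symmetry of $g$. With that in hand, both the duality identity and the vanishing of torsion for $\nabla^{(m)}$ and $\nabla^{(e)}$ follow from the explicit coordinate formulas already established.
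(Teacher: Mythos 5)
Your proposal is correct and follows essentially the same route as the paper's proof: the duality identity is extended from coordinate fields to arbitrary vector fields by checking $C^\infty(\mathcal{P}_+(I))$-compatibility (Leibniz rule) in each slot, and torsion-freeness is read off from the explicit local formulas \eqref{eq:m-p+conn} and \eqref{eq:e-p+conn}. Your only addition is to make explicit, for $\nabla^{(e)}$, that the two correction terms are symmetric in $(A,B)$ and hence cancel under antisymmetrization---a detail the paper compresses into ``similarly''---so nothing substantively new or missing.
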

\begin{proof}
For the first claim, we just note that we have proved for a basis of local vector fields. Now, we will use the fact on $A=\dfrac{\partial}{\partial \mu_k},B=\dfrac{\partial}{\partial \mu_l},C=\dfrac{\partial}{\partial \mu_s}$ and extend (via Leibniz rule for the connections and the vector fields) as $C^\infty(\mathcal{P}_*(I))$-module. For this, consider $b$ a smooth function in $\mathcal{P}_+(I)$, and we will prove the relation for $bA$ as follows:
$$
b(A{g}_\mu(B,C))={g}_\mu(b\nabla^{(m)}_AB,C )+{g}_\mu(B,b\nabla^{(e)}_AC )={g}_\mu(\nabla^{(m)}_{bA}B,C )+{g}_\mu(B,\nabla^{(e)}_{bA}C ); $$
and in a similar way we will proved for $bB$:
\begin{align*}
 A{g}_\mu(bB,C)&=A(b{g}_\mu(B,C))=(Ab){g}_\mu(B,C)+bA{g}_\mu(B,C) \\ 
 &=(Ab){g}_\mu(B,C)+b({g}_\mu(\nabla^{(m)}_AB,C )+{g}_\mu(B,\nabla^{(e)}_AC ))\\
 &={g}_\mu((Ab)B+b\nabla^{(m)}_AB,C )+{g}_\mu(bB,\nabla^{(e)}_AC )\\
 &={g}_\mu(\nabla^{(m)}_AbB,C )+{g}_\mu(bB,\nabla^{(e)}_AC ),
\end{align*}
yielding the desired result. For $bC$, the verification is the same as for $bB$. Finally,  extend the relation linearly and the relation holds.

For the second claim, recall that the torsion of a connection is the tensor determined by $\nabla_{A}B-\nabla_{B}A-[A,B]$, so we must verify that $\nabla_{A}B-\nabla_{B}A=[A,B]$, for $m$-connection and $e$-connection.
    From the defining relation \eqref{eq:m-p+conn}, we just note that 
    for any vector fields $A=\sum_i a^{i}\frac{\partial}{\partial e^{i}}$ y $B=\sum_j  b^{j}\frac{\partial}{\partial e^{j}}$, we get:
    $$
        \nabla^{(m)}_{A}B-\nabla^{(m)}_{B}A=\sum_{i,j}a^{i}\frac{\partial b_{j}}{\partial e^{i}}-b^{j}\frac{\partial a_{i}}{\partial e^{j}}, 
    $$ 
which is the same as $[A,B]$. Simlarly for the relation in \eqref{eq:e-p+conn}, then we get torsion free in both cases. 
\end{proof}

\begin{rmk}
The previous fact is a well known fact and a key starting point on the theory of information geometry these connections. A proof can be found in \cite[Proposition~1.10.4]{ovidiu}. All the proofs known in the literature use Christhoffel symbols, but our proof avoid this geometrical notion and show that depends only on local direction because the identity $A{g}_\mu(B,C)={g}_\mu(\nabla^{(m)}_AB,C )+{g}_\mu(B,\nabla^{(e)}_AC )$ is tensorial
\end{rmk}



\section{Geometric concepts for information geometry}

Motivated by the geometry on  statistical models, the Fisher metric and dual connections, we will define {\it statistical manifolds} as abstraction of this structures for any Riemannian manifold.  The main goal of this section is to present the difference between statistical model and  statistical manifold. The technique we will use is to endow those manifold with a particular geometric structure and verify that such structures coincide with the statistical model.



\subsection{Statistical models and dual structure}
This section takes as reference \cite[section 3.1]{amariMeth} and \cite[section 4.2]{jost} to study the dual structure and the notion of torsion-free connections for any Riemannian manifold extending the notion of statistical models previously defined.

\begin{definition}\label{def:dual}
    Two affine connections $\nabla^{(1)}$ and $\nabla^{(-1)}$ on a Riemannian manifold $(M,g)$ are called dual connection if for any three vector fields they satisfy \begin{equation}\label{identidaddual}
    Zg(X,Y):=g(\nabla_{Z}^{(1)}X, Y)+g(X,\nabla_{Z}^{(-1)}Y).
\end{equation}
In this case, the triple $(g,\nabla^{(1)},\nabla^{(-1)})$ is called dual structure on $M$.
\end{definition}



Recall that the torsion of an affine connection $\nabla$ is $\mathrm{Tor}(X,Y)=\nabla_XY-\nabla_YX-[X,y]$, and is called torsion-free if $\mathrm{Tor}(X,Y)=0$. A Riemannian manifold $(M,g)$ is called {\bf statistical manifold} if it is endowed with a pair of torsion-free dual connections $(\nabla^{(1)},\nabla^{(-1)})$.  

\begin{lemma}\label{unicidadConexion}
Let $(M, g)$  be a Riemannian manifold and $\nabla^{(1)}$ an affine connection, then there exists a unique  dual connection $\nabla^{(1)}$ with respect to $g$.   \end{lemma}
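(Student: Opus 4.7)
The plan is to define $\nabla^{(-1)}$ implicitly by the duality identity, using the non-degeneracy of the metric, and then verify that the resulting operator really is an affine connection. Uniqueness is immediate and I will take care of it first so it motivates the construction.

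For uniqueness, suppose $\nabla^{(-1)}$ and $\widetilde{\nabla}^{(-1)}$ are two connections both satisfying \eqref{identidaddual} with respect to the same $\nabla^{(1)}$. Subtracting the two versions of \eqref{identidaddual} yields $g(X,\nabla_Z^{(-1)}Y-\widetilde{\nabla}_Z^{(-1)}Y)=0$ for every vector field $X$. Since $g$ is non-degenerate pointwise, this forces $\nabla_Z^{(-1)}Y=\widetilde{\nabla}_Z^{(-1)}Y$ for all $Y,Z$, giving uniqueness.

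For existence, fix $Y,Z\in\Gamma(TM)$ and consider the assignment
\[
X\longmapsto \omega_{Y,Z}(X):=Zg(X,Y)-g(\nabla^{(1)}_Z X,Y).
\]
I will check that $\omega_{Y,Z}$ is $C^\infty(M)$-linear in $X$: replacing $X$ by $fX$ with $f\in C^\infty(M)$, the first term produces $(Zf)g(X,Y)+fZg(X,Y)$ by the Leibniz rule for the vector field $Z$, while the second term produces $(Zf)g(X,Y)+fg(\nabla^{(1)}_ZX,Y)$ by the Leibniz rule \eqref{def:conn} for $\nabla^{(1)}$. The $(Zf)g(X,Y)$ contributions cancel and we obtain $\omega_{Y,Z}(fX)=f\omega_{Y,Z}(X)$. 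Hence $\omega_{Y,Z}$ is a $1$-form, and by non-degeneracy of $g$ there is a unique vector field, which I name $\nabla^{(-1)}_ZY$, such that $g(X,\nabla^{(-1)}_ZY)=\omega_{Y,Z}(X)$ for every $X$. By construction, $\nabla^{(-1)}$ satisfies \eqref{identidaddual}.

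It remains to check that $\nabla^{(-1)}$ is an affine connection, that is, $\mathbb{R}$-bilinear, $C^\infty(M)$-linear in $Z$, and Leibniz in $Y$. Linearity in all slots over $\mathbb{R}$ is inherited from the defining identity. For $C^\infty$-linearity in $Z$, apply \eqref{identidaddual} with $fZ$ in place of $Z$ and use $C^\infty$-linearity of $\nabla^{(1)}$ in its subscript; both terms on the right scale by $f$, giving $g(X,\nabla^{(-1)}_{fZ}Y)=fg(X,\nabla^{(-1)}_ZY)$ for every $X$, and non-degeneracy concludes. For the Leibniz rule, apply \eqref{identidaddual} with $fY$ in place of $Y$: the term $Zg(X,fY)$ expands as $(Zf)g(X,Y)+fZg(X,Y)$, while $g(\nabla^{(1)}_ZX,fY)=fg(\nabla^{(1)}_ZX,Y)$; subtracting and using \eqref{identidaddual} for $Y$ gives $g(X,\nabla^{(-1)}_Z(fY))=g(X,(Zf)Y+f\nabla^{(-1)}_ZY)$, from which the Leibniz rule follows. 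None of the steps is a serious obstacle; the only conceptual point is that \eqref{identidaddual} has exactly the right tensorial shape to let non-degeneracy of $g$ promote a pointwise identity into a global definition of a connection.
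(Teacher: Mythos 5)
Your proof is correct, and your uniqueness argument coincides with the paper's: subtract the two instances of \eqref{identidaddual} and invoke pointwise non-degeneracy of $g$. Where you genuinely go beyond the paper is existence. The paper disposes of it in a single sentence (``a direct consequence of the identity''), leaving the reader to supply the construction; you actually carry it out: you check that $\omega_{Y,Z}(X)=Zg(X,Y)-g(\nabla^{(1)}_ZX,Y)$ is $C^\infty(M)$-linear in $X$ (the two $(Zf)g(X,Y)$ terms cancel, as you observe), so the musical isomorphism of the non-degenerate metric yields a well-defined vector field $\nabla^{(-1)}_ZY$, and you then verify tensoriality in $Z$ and the Leibniz rule in $Y$, i.e.\ precisely the axioms \eqref{def:conn}. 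All three computations are correct. The only point you leave tacit is smoothness of the assignment $(Z,Y)\mapsto\nabla^{(-1)}_ZY$, which is immediate because $\omega_{Y,Z}$ is a smooth $1$-form and $g^\sharp$ is a smooth bundle isomorphism, so it is not a gap worth flagging. In short, both arguments rest on the same underlying idea --- the duality identity determines $\nabla^{(-1)}$ through non-degeneracy of $g$ --- but your write-up supplies the existence half that the paper merely asserts, and that is the more substantive half of the lemma.
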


\begin{proof}
The existence of  $\nabla^{(-1)}$ is a direct consequence of the identity  \eqref{identidaddual} defining the dual connection. Now, consider $\Bar{\nabla}$ and $\nabla^{(-1)}$ dual connections to  $\nabla^{(1)}$ with respect to $(M,g)$, that is, for all tangent vectors  $X,Y,Z$ in $M$, we have     \begin{align*}g(X,Y)&=g(\nabla^{(1)}_{Z}X,Y)+g(X,\Bar{\nabla}_{Z}Y)\\  Zg(X,Y)&=g(\nabla^{(1)}_{Z}X,Y)+g(X,\nabla^{(-1)}_{Z}Y);
    \end{align*}
in particular we get  $0=g(X,(\Bar{\nabla}-\nabla^{(-1)})_{Z}Y)$, which finally yields $\Bar{\nabla}=\nabla^{(-1)}$. 
    
\end{proof}

We will present a summary of well known facts of geometry of dual connections, but we state them in a general version and give coordinate-free proofs that are not available in the literature.
\begin{theorem}\label{estructuradualconexiAlpha}
Let $(M,g)$ be a Riemannian manifold, and  $(\nabla^{(1)},\nabla^{(-1)})$ any two connections in $M$.
\begin{enumerate}
\item $(\nabla^{(1)},\nabla^{(-1)})$ is dual structure if and only if $(\alpha\nabla^{(1)}+\beta\nabla^{(-1)},\beta\nabla^{(1)}+\alpha\nabla^{(-1)})$ is dual structure for any combination such that $\alpha+\beta=1$.
    \item $(\nabla^{(1)},\nabla^{(-1)})$ are torsion-free if and only if $(\alpha\nabla^{(1)}+\beta\nabla^{(-1)},\beta\nabla^{(1)}+\alpha\nabla^{(-1)})$ are torsion-free for any combination such that $\alpha+\beta=1$,    
    \item If $(\nabla^{(1)},\nabla^{(-1)})$ are dual and torsion free, then $2\nabla^{(0)}=\nabla^{(1)}+\nabla^{(-1)}$,
    \item whenever $2\nabla^{(0)}=\nabla^{(1)}+\nabla^{(-1)}$, then $\nabla^{(1)}$ is torsion free if and only if $\nabla^{(-1)}$ is also torsion free,
    
\end{enumerate}
    
\end{theorem}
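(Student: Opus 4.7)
The strategy is to exploit two linearity observations: the duality defect $(X,Y,Z) \mapsto Zg(X,Y) - g(\nabla_Z X,Y) - g(X,\nabla'_Z Y)$ is bilinear in the pair $(\nabla,\nabla')$, and the torsion defect $(X,Y) \mapsto \nabla_X Y - \nabla_Y X - [X,Y]$, although not strictly linear in $\nabla$ because a sum of two connections fails the Leibniz rule, behaves linearly under convex combinations $\alpha\nabla^{(1)} + \beta\nabla^{(-1)}$ with $\alpha+\beta=1$ (which \emph{are} connections). Both observations will be verified by a direct unwinding of definitions.

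For item (1), I will simply expand
\[
g((\alpha\nabla^{(1)}+\beta\nabla^{(-1)})_Z X,\,Y) + g(X,\,(\beta\nabla^{(1)}+\alpha\nabla^{(-1)})_Z Y),
\]
regroup the terms as $\alpha\bigl[g(\nabla^{(1)}_Z X,Y)+g(X,\nabla^{(-1)}_Z Y)\bigr] + \beta\bigl[g(\nabla^{(-1)}_Z X,Y)+g(X,\nabla^{(1)}_Z Y)\bigr]$, and apply the duality of $(\nabla^{(1)},\nabla^{(-1)})$ in each bracket to obtain $(\alpha+\beta)Zg(X,Y)=Zg(X,Y)$. The converse is immediate by choosing $(\alpha,\beta)=(1,0)$. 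For item (2), I will compute
\[
\mathrm{Tor}_{\alpha\nabla^{(1)}+\beta\nabla^{(-1)}}(X,Y) = \alpha\bigl(\nabla^{(1)}_X Y - \nabla^{(1)}_Y X\bigr) + \beta\bigl(\nabla^{(-1)}_X Y - \nabla^{(-1)}_Y X\bigr) - [X,Y],
\]
and rewrite $-[X,Y] = -\alpha[X,Y] - \beta[X,Y]$ using $\alpha+\beta=1$ to conclude that this equals $\alpha\,\mathrm{Tor}_{\nabla^{(1)}} + \beta\,\mathrm{Tor}_{\nabla^{(-1)}}$. Torsion-freeness of both factors then gives torsion-freeness of every such combination, and the converse again follows by taking $(1,0)$ and $(0,1)$.

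For item (3), once (1) and (2) are available I will apply them with $\alpha=\beta=1/2$. This shows that $\bar\nabla := \tfrac{1}{2}(\nabla^{(1)}+\nabla^{(-1)})$ is self-dual (hence metric, by the definition of duality specialized to $\bar\nabla$ against itself) and torsion-free; by Lemma~\ref{unicidadConexion} and the uniqueness of the Levi-Civita connection, $\bar\nabla = \nabla^{(0)}$, i.e. $2\nabla^{(0)} = \nabla^{(1)}+\nabla^{(-1)}$. For item (4), the hypothesis says exactly that $\nabla^{(0)} = \tfrac{1}{2}\nabla^{(1)} + \tfrac{1}{2}\nabla^{(-1)}$, so the same linearity computation from (2) yields
\[
0 = \mathrm{Tor}_{\nabla^{(0)}} = \tfrac{1}{2}\mathrm{Tor}_{\nabla^{(1)}} + \tfrac{1}{2}\mathrm{Tor}_{\nabla^{(-1)}},
\]
hence $\mathrm{Tor}_{\nabla^{(1)}} = -\mathrm{Tor}_{\nabla^{(-1)}}$ and one vanishes if and only if the other does.

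The only subtle point, and the step I would write most carefully, is the torsion computation in (2): a naive reader might expect $\mathrm{Tor}$ to be linear in $\nabla$, but sums of connections are not connections, so the identity genuinely requires the affine constraint $\alpha+\beta=1$ to absorb the bracket term $[X,Y]$. Everything else is bookkeeping, and the proof is entirely coordinate-free as advertised, relying only on the tensorial identities for $g$, the Leibniz rule packaged in the definition of a connection, and the uniqueness statement already recorded in Lemma~\ref{unicidadConexion}.
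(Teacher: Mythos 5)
Your proposal is correct, and its forward directions and items (3)--(4) follow essentially the same route as the paper: expand the duality pairing by bilinearity of $g$, absorb the bracket in the torsion using $\alpha+\beta=1$, specialize to $\alpha=\beta=\tfrac12$ to get a self-dual (hence metric) torsion-free connection that must be Levi-Civita, and read item (4) off the resulting torsion identity. The one genuine difference is the converse of items (1) and (2): you take $(\alpha,\beta)=(1,0)$, which is immediate but uses the full strength of the ``for any combination'' hypothesis, whereas the paper inverts the affine combination, writing $\nabla^{(1)}$ and $\nabla^{(-1)}$ back as an affine combination (with coefficients $\tilde\alpha=\alpha/(\alpha-\beta)$, $\tilde\beta=-\beta/(\alpha-\beta)$, again summing to $1$) of the new pair and reapplying the forward direction. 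The paper's inversion buys a strictly stronger converse --- duality or torsion-freeness of a \emph{single} combination with $\alpha\neq\beta$ already forces it for the original pair --- at the cost of the implicit restriction $\alpha\neq\beta$ (which is genuinely needed: at $\alpha=\beta=\tfrac12$ the combination collapses to a self-dual pair and cannot determine the original connections), while your version proves exactly the literal biconditional as stated. Two smaller points in your favor: you write out the torsion computation in (2) explicitly, where the paper only says it ``follows the same argument,'' and your remark that sums of connections fail the Leibniz rule while affine combinations survive is more precise than the paper's preliminary claim that arbitrary $\mathbb{R}$-linear combinations of connections are connections; your word ``convex'' should strictly be ``affine,'' since $\alpha,\beta$ may be negative.
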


\begin{proof}
Before we give the proofs of each item, we must remark that any $\mathbb{R}$-linear combination of affine connection is again affine connection, just because all the defining condition (as in Remark~?) are preserved by linearity.\\
{\it Proof of (1):}First assume that $(\nabla^{(1)},\nabla^{(-1)})$ are dual structure, then by linearity of the metric we get
\begin{align*}
g&=((\alpha\nabla^{(1)}+\beta\nabla^{(-1)})_XY,Z)+g(Y,(\beta\nabla^{(1)}+\alpha\nabla^{(-1)})_XZ)\\
&=\alpha(g(\nabla^{(1)}_XY,Z)+g(Y,\nabla^{(-1)}_XZ))+\beta(g(\nabla^{(-1)}_XY,Z)+g(Y,\nabla^{(1)}_XZ))=Xg(Y,Z).
\end{align*}

For the other direction, just note that 
\begin{align*}
\nabla^{(1)}&=\tilde{\alpha}(\alpha\nabla^{(1)}+\beta\nabla^{(-1)})+\tilde{\beta}(\beta\nabla^{(1)}+\alpha\nabla^{(-1)})\\
\nabla^{(-1)}&=\tilde{\beta}(\alpha\nabla^{(1)}+\beta\nabla^{(-1)})+\tilde{\alpha}(\beta\nabla^{(1)}+\alpha\nabla^{(-1)})
\end{align*}
with $\tilde{\alpha}=\dfrac{\alpha}{\alpha-\beta}$ and $\tilde{\beta}=\dfrac{-\beta}{\alpha-\beta}$. Note that we also have that $\tilde{\alpha}+\tilde{\beta}=1$, and the results follows from the previous claim.\\

{\it Proof of (2):} This verification follows the same argument as previous item.\\
        
{\it Proof of (3):}Just note that for $\alpha=\dfrac{1}{2}=\beta,$ we get $(\alpha\nabla^{(1)}+\beta\nabla^{(-1)},\beta\nabla^{(1)}+\alpha\nabla^{(-1)})$ is dual structure and torsion-free (previous items). Indeed, in this case we have 
$$\alpha\nabla^{(1)}+\beta\nabla^{(-1)}=\beta\nabla^{(1)}+\alpha\nabla^{(-1)},$$
which means that it is {\it self-dual}, or in other words it is a metric connection. By fundamental theorem in Riemannian geometry, we get that $\alpha\nabla^{(1)}+\beta\nabla^{(-1)}$ is the Levi-Civita connection, and the claim holds.\\

{\it Proof of (4):} An easy verification from the identity $2\nabla^{(0)}=\nabla^{(1)}+\nabla^{(-1)}$, leads us to note that
$$\nabla^{(1)}_XY-\nabla^{(1)}_YX=2\nabla^{(0)}_XY-\nabla^{(-1)}_XY-(2\nabla^{(0)}_YX-\nabla^{(-1)}_XY)=2[X,Y]-(\nabla^{(-1)}_XY-\nabla^{(-1)}_YX),$$
which yields that $\nabla^{(1)}_XY-\nabla^{(1)}_YX-[X,Y]=[X,Y]-(\nabla^{(-1)}_XY-\nabla^{(-1)}_YX)$, and the claim holds.
\end{proof}

In what follows, we work only with this type of combinations, in particular we will consider the family of  \textbf{$\alpha$-connection}, with  $\alpha\in [-1,1]$, as:

    \begin{equation}\label{eq:a-conn}
       ( \nabla^{(\alpha)}=\frac{1+\alpha}{2}\nabla^{(1)}+\frac{1-\alpha}{2}\nabla^{(-1)}, \nabla^{(-\alpha)}=\frac{1-\alpha}{2}\nabla^{(1)}+\frac{1+\alpha}{2}\nabla^{(-1)})
    \end{equation}

or in its equivalent version:
\begin{align}\label{alphaconexion}
    {\nabla}^{(\alpha)}= {\nabla}^{(-1)}+(\frac{1+\alpha}{2})({\nabla}^{(1)}-{\nabla}^{(-1)}),\quad {\nabla}^{(-\alpha)}= {\nabla}^{(1)}-(\frac{1+\alpha}{2})({\nabla}^{(1)}-{\nabla}^{(-1)}).
\end{align}
As a direct consequence of the previous theorem, we get:
\begin{corollary}\label{cor4.5}
 If  $(g,\nabla^{(1)}, \nabla^{(-1)})$ is a torsion-free dual structure in $M$, then  $(g,\nabla^{(\alpha)}, \nabla^{(-\alpha)})$ is also torsion-free dual structure, for any $-1\leq \alpha \leq 1$. and we get that, 
 \begin{equation}\label{Levi-Civita-alphaconexi}
    \nabla^{(0)}=\frac{1}{2}(\nabla^{(\alpha)}+\nabla^{(-\alpha)}).
\end{equation}

\end{corollary}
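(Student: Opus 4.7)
The plan is to reduce everything to Theorem \ref{estructuradualconexiAlpha}, since the $\alpha$-connections are precisely the convex combinations studied there. Write $\alpha' = \frac{1+\alpha}{2}$ and $\beta' = \frac{1-\alpha}{2}$. Because $-1 \leq \alpha \leq 1$, both coefficients lie in $[0,1]$, and they satisfy $\alpha' + \beta' = 1$. Then by definition \eqref{eq:a-conn} we have exactly
\[
\nabla^{(\alpha)} = \alpha'\nabla^{(1)} + \beta'\nabla^{(-1)}, \qquad \nabla^{(-\alpha)} = \beta'\nabla^{(1)} + \alpha'\nabla^{(-1)},
\]
which is the pair treated in items (1) and (2) of Theorem \ref{estructuradualconexiAlpha}.

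Next, I invoke item (1) of that theorem on the dual pair $(\nabla^{(1)}, \nabla^{(-1)})$: since $\alpha' + \beta' = 1$, the pair $(\nabla^{(\alpha)}, \nabla^{(-\alpha)})$ is automatically a dual structure with respect to $g$. Then I apply item (2) in the same way, using that both $\nabla^{(1)}$ and $\nabla^{(-1)}$ are torsion-free, to conclude that $\nabla^{(\alpha)}$ and $\nabla^{(-\alpha)}$ are torsion-free as well. This establishes that $(g, \nabla^{(\alpha)}, \nabla^{(-\alpha)})$ is a torsion-free dual structure for every $\alpha \in [-1,1]$.

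Finally, for the identity \eqref{Levi-Civita-alphaconexi}, I just add the two expressions above and cancel:
\[
\nabla^{(\alpha)} + \nabla^{(-\alpha)} = (\alpha' + \beta')\nabla^{(1)} + (\alpha' + \beta')\nabla^{(-1)} = \nabla^{(1)} + \nabla^{(-1)}.
\]
Since $(\nabla^{(1)}, \nabla^{(-1)})$ is torsion-free dual, item (3) of Theorem \ref{estructuradualconexiAlpha} gives $\nabla^{(1)} + \nabla^{(-1)} = 2\nabla^{(0)}$, so $\tfrac{1}{2}(\nabla^{(\alpha)} + \nabla^{(-\alpha)}) = \nabla^{(0)}$ as required.

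There is essentially no obstacle here: the statement is a corollary in the strict sense, and the only content beyond symbol manipulation is the verification that the convexity constraint $\alpha' + \beta' = 1$ is preserved by the parametrization $\alpha \mapsto (\tfrac{1+\alpha}{2}, \tfrac{1-\alpha}{2})$, which is immediate. The proof is therefore a short invocation of the three relevant items of the preceding theorem.
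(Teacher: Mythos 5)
Your proposal is correct and is exactly the paper's route: the paper states this corollary as a direct consequence of Theorem~\ref{estructuradualconexiAlpha}, which is precisely your reduction via $\alpha'=\tfrac{1+\alpha}{2}$, $\beta'=\tfrac{1-\alpha}{2}$ with $\alpha'+\beta'=1$, applying items (1) and (2) for the torsion-free dual structure and item (3) for the identity $2\nabla^{(0)}=\nabla^{(\alpha)}+\nabla^{(-\alpha)}$. Your extra observation that $\alpha',\beta'\in[0,1]$ is harmless but unnecessary, since the theorem only requires $\alpha'+\beta'=1$.
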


\begin{example}
Using the notation:
\begin{align*}
    \nabla^{(1)}:=\cc^{(e)} \qquad  \nabla^{(-1)}:=\cc^{(m)}
\end{align*}
we recall that the statistical model $(\mathcal{P}_{+}(I),{g}, \nabla^{(-1)}, \nabla^{(1)})$ is also a statistical manifold. Additionally, previous definition yields that $\alpha$-connections  are torsion-free and dual structure. Furthermore, using ~\eqref{econexion} and ~\eqref{mconexion},  applied to $B$ in the direction of $A$ (in $\mathcal{P}_{+}(I)$) gives local representation as:
\begin{align}\label{conexalpha}
    {\nabla}^{(\alpha)}_{A}B |_{\mu}=\Big(\mu, \sum_{i}\Big(\frac{\partial b_{i}}{\partial a_{\mu}}(\mu)-\frac{1+\alpha}{2} {g}_{\mu}(A,B)\Big)\delta^{i}\Big),\quad {\nabla}^{(-\alpha)}_{A}B |_{\mu}=\Big(\mu, \sum_{i}\Big(\frac{\partial b_{i}}{\partial a_{\mu}}(\mu)-\frac{1-\alpha}{2} {g}_{\mu}(A,B)\Big)\delta^{i}\Big).
\end{align}

\end{example}

As final remark in this section is that the usual prove of the previous results lie on Christoffel symbols for the two dual connection on statistical manifolds, however we give a coordinate free proof of these facts and even in the more general setting of statistical structure \cite[cf. Section 4.2]{jost}.

\subsection{Tensor Amari-Chentsov}\label{estructuraestadistica}
We now want to measure the difference between two dual structures $(\nabla^{(1)},\nabla^{(-1)})$, in explicit way we want to compute $\mathcal{T}=\nabla^{(-1)}-\nabla^{(1)}:\mathfrak{X}^2(M)\to \mathfrak{X}(M)$. We can study this difference by using  the metric tensor $g$, i.e we define the following tensor:
 or $T:\mathfrak{X}^3(M)\to C^\infty(M)$:
    \begin{equation}\label{ecu-Tensor}
      T(X,Y,Z)={g}(\nabla_{X}^{(-1)}Y-\nabla_{X}^{(1)}Y, Z)={g}(\mathcal{T}(X,Y),Z)
       \end{equation}

Note the special case of $\nabla^{(0)}$ of Levi-Civita connection, this tensor vanishes identically. In the Literature, $T$ (simialry $\mathcal{T}$) is known as {\bf Amari-Chentsov} tensor.
\begin{example}
    
Observe, ${\nabla}_{A}^{(-1)}B|_{\mu}=\frac{\partial b}{\partial a_\mu}(\mu)$ and ${\nabla}_{A}^{(1)}B|_{\mu}=\frac{\partial b}{\partial a_{\mu}}(\mu)-{g}_{\mu}(A,B)$, the difference between them on   $A \sim (\mu, a_{\mu}), B \sim (\mu, b_{\mu})\in T_\mu \mathcal{M}_+(I)$ is:
\begin{equation}\label{diferencia-conexiones}
     \mathcal{T}(A,B):={\nabla}_{A}^{(-1)}B|_{\mu}-{\nabla}_{A}^{(1)}B|_{\mu}={g}_{\mu}(A,B)=\sum_{i\in I}\frac{1}{\mu_{i}}a_{i}b_{i} 
\end{equation}
Using the Fisher metric with respect to other tangent vector  $C=(\mu,c_\mu)$ on $\mu\in \mathcal{M}_+(I)$, the relation \eqref{ecu-Tensor} yields \cite[section 2.5.1]{jost}:
\begin{align}\label{tensorAmariChentsovCoor}
    \mathbf{T}_{\mu}(A_{\mu}, B_{\mu}, C_{\mu})= \sum_{i\in I} \mu_{i} \frac{a_{\mu,i}}{\mu_{i}}\frac{b_{\mu,i}}{\mu_{i}}\frac{c_{\mu,i}}{\mu_{i}}.
\end{align}
In this way the Amari-Chentsov in $\mathcal{M}_+(I)$ is: 
\begin{align*}
    T_{\xi} &=E_{{p}}\Big[\frac{\partial}{\partial \xi_{i}}\log p \frac{\partial}{\partial \xi_{j}}\log p \frac{\partial}{\partial \xi_{k}}\log p\Big]\\
\end{align*}
\end{example}

\begin{proposition}\label{dual-estructura}\cite[Theorem 4.1]{jost} The Amari-Chentsov tensor $T$ from a torsion-free dual structure  $(g, \nabla^{(1)}, \nabla^{(-1)})$ is a symmetric 3-tensor.
 \end{proposition}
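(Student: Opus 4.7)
The plan is to verify two separate properties of $T(X,Y,Z)=g(\nabla^{(-1)}_XY-\nabla^{(1)}_XY,Z)$: first that it is $C^\infty(M)$-multilinear, i.e.\ a genuine $(0,3)$-tensor, and then that it is totally symmetric. I expect the tensoriality to be routine, while the symmetry argument is the conceptual step: the symmetry in the second and third slots is where the duality of $(\nabla^{(1)},\nabla^{(-1)})$ enters, and the symmetry in the first and second slots is where the torsion-free hypothesis enters.

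For tensoriality I would proceed slot by slot. In the third slot $C^\infty(M)$-linearity is immediate from linearity of $g$. In the first slot, both $\nabla^{(1)}$ and $\nabla^{(-1)}$ are tensorial in their subscript by the defining property \eqref{def:conn}, so the difference $\mathcal{T}(X,Y)=\nabla^{(-1)}_XY-\nabla^{(1)}_XY$ inherits this tensoriality and hence so does $T$. In the second slot, the Leibniz rules for $\nabla^{(1)}$ and $\nabla^{(-1)}$ produce the same derivative term $(Xf)Y$, which cancels when we subtract, so $\mathcal{T}(X,fY)=f\mathcal{T}(X,Y)$ and again $T$ is tensorial. In particular $\mathcal{T}$ is genuinely a $(1,2)$-tensor, which is the conceptual reason the whole object behaves as a tensor.

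For symmetry I would tackle the last two slots first. Using the duality identity \eqref{identidaddual} in the two forms
\begin{align*}
Xg(Y,Z)&=g(\nabla^{(1)}_XY,Z)+g(Y,\nabla^{(-1)}_XZ),\\
Xg(Z,Y)&=g(\nabla^{(1)}_XZ,Y)+g(Z,\nabla^{(-1)}_XY),
\end{align*}
and subtracting, the symmetry $g(Y,Z)=g(Z,Y)$ on the left forces $g(\nabla^{(-1)}_XY,Z)-g(\nabla^{(1)}_XY,Z)=g(\nabla^{(-1)}_XZ,Y)-g(\nabla^{(1)}_XZ,Y)$, i.e.\ $T(X,Y,Z)=T(X,Z,Y)$. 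For symmetry in the first two slots, torsion-freeness of both connections gives $\nabla^{(\epsilon)}_XY-\nabla^{(\epsilon)}_YX=[X,Y]$ for $\epsilon=\pm 1$, and subtracting these two identities the bracket cancels, leaving $\mathcal{T}(X,Y)=\mathcal{T}(Y,X)$ and hence $T(X,Y,Z)=T(Y,X,Z)$.

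Finally, since the transpositions $(2\,3)$ and $(1\,2)$ generate the symmetric group $S_3$, combining the two symmetries yields total symmetry of $T$ in its three arguments, which is the claim. The only nontrivial step is the first symmetry: one must recognize that applying the duality identity with $(Y,Z)$ and with $(Z,Y)$ and exploiting symmetry of $g$ gives exactly the required cancellation. Everything else is a direct unwinding of definitions.
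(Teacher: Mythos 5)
Your proof is correct and follows essentially the same coordinate-free route as the paper: the duality identity combined with the symmetry of $g$ yields $T(X,Y,Z)=T(X,Z,Y)$, and torsion-freeness of both connections cancels the bracket to give $T(X,Y,Z)=T(Y,X,Z)$. The only cosmetic difference is in the tensoriality check, where you verify $C^\infty(M)$-linearity in each slot directly (observing that the Leibniz terms cancel in $\mathcal{T}=\nabla^{(-1)}-\nabla^{(1)}$), whereas the paper checks only the third slot and propagates linearity to the other slots via the already-established symmetries.
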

We remark that all proof of this claim lies on the Christoffel symbols, but here we will do it globally. 
 \begin{proof}
First, by using the dual structure, we get:
\begin{align*}
    T(X,Y,Z) &=g(\nabla^{(-1)}_{X}Y-\nabla^{(1)}_{X}Y,Z)-g(\nabla^{(1)}_{X}Y,Z)=Xg(Y,Z)-g(Y,\nabla^{(1)}_{X}Z)-(Xg(Y,Z)-g(Y,\nabla^{(-1)}_{X}Z))\\
    &=g(Y,\nabla^{(-1)}_{X}Z)-g(Y,\nabla^{(1)}_{X}Z)=T(X,Z,Y).
\end{align*}
Also note that the torsion-free condition implies that:
$$
    T(X,Y,Z)=g(\nabla^{(-1)}_{X}Y,Z)=g(([X,Y]+\nabla^{(-1)}_{Y}X)-([X,Y]+\nabla^{(1)}_{Y}X),Z)=T(Y,X,Z).
$$
These two relations show that $T$ is symmetric on the three components. It remains to verify that it is tensorial, but this follows directly from the fact that:
$$T(X,Y,fZ)=g(\nabla^{(-1)}_{X}Y-\nabla^{(1)}_{X}Y,fZ)=fg(\nabla^{(-1)}_{X}Y-\nabla^{(1)}_{X}Y,Z)=fT(X,Y,Z).$$ From this identity, we conclude: 
$$fT(X,Y,Z)=fT(X,Z,Y)=T(X,Z,fY)=T(X,fY,Z)$$
and similar for $fT(X,Y,Z)=T(fX,Z,Y)$.
 \end{proof}

\begin{definition}\label{defVariedaEstadis}\cite [Definition 4.2]{jost}
    A \textbf{statistical structure} on a manifold $M$ consists of a metric  $g$ and a \textit{3-tensor} $T$  that is symmetric in all arguments. 
\end{definition}

Indeed, both structures are the same, as it is showed in the following results. We give complete and explicit construction to show the dependence on the Levi-Civita connection.

\begin{proposition}\label{corola-gytdefEstruDual} \cite[Theorem 4.2]{jost}  
   Each statistical structure $(M,g,T)$ induces a statistical manifold   $(M,g,\nabla^{(1)}, \nabla^{(-1)})$, i.e dual and torsion-free. 
\end{proposition}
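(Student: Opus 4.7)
The plan is to build the two dual connections as tensorial perturbations of the Levi-Civita connection $\nabla^{(0)}$ associated with $g$, using $T$ as the perturbation data. The key observation is that formulas \eqref{ecu-Tensor} and \eqref{Levi-Civita-alphaconexi} together force
$$g(\nabla^{(-1)}_X Y - \nabla^{(0)}_X Y,\,Z) = \tfrac{1}{2}\,T(X,Y,Z),$$
so the construction is essentially dictated by this identity.

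First, I would use the non-degeneracy of $g$ to turn $T$ into a $(1,2)$-tensor $K:\Gamma(TM)\times\Gamma(TM)\to\Gamma(TM)$ characterized by
$$g(K(X,Y),Z)=T(X,Y,Z)\qquad\text{for all } X,Y,Z\in\Gamma(TM).$$
Since $T$ is $C^\infty(M)$-linear in each slot, $K$ is $C^\infty(M)$-linear in $X$ and $Y$. Next, define
$$\nabla^{(1)}_X Y := \nabla^{(0)}_X Y - \tfrac{1}{2}K(X,Y),\qquad \nabla^{(-1)}_X Y := \nabla^{(0)}_X Y + \tfrac{1}{2}K(X,Y).$$
Because adding a $(1,2)$-tensor to an affine connection yields another affine connection (tensoriality in $X$ and the Leibniz rule in $Y$ are inherited from $\nabla^{(0)}$, while the $K$-correction contributes only $C^\infty(M)$-linear terms), both $\nabla^{(1)}$ and $\nabla^{(-1)}$ are affine connections on $M$.

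The second step is to verify duality with respect to $g$. Using the metric property of $\nabla^{(0)}$ together with the symmetry of $T$ in its last two slots, I compute
\begin{align*}
g(\nabla^{(1)}_X Y,Z)+g(Y,\nabla^{(-1)}_X Z)
&= g(\nabla^{(0)}_X Y,Z)+g(Y,\nabla^{(0)}_X Z)-\tfrac{1}{2}T(X,Y,Z)+\tfrac{1}{2}T(X,Z,Y)\\
&= X\,g(Y,Z),
\end{align*}
which is exactly \eqref{identidaddual}. The third step is torsion-freeness. For $\nabla^{(1)}$, the torsion-free property of $\nabla^{(0)}$ gives
$$\nabla^{(1)}_X Y - \nabla^{(1)}_Y X - [X,Y] = -\tfrac{1}{2}\bigl(K(X,Y)-K(Y,X)\bigr),$$
and the symmetry of $T$ in its first two slots (combined with non-degeneracy of $g$) forces $K(X,Y)=K(Y,X)$, so the torsion vanishes. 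The same argument works for $\nabla^{(-1)}$; alternatively, invoke item (4) of Theorem~\ref{estructuradualconexiAlpha}, since by construction $2\nabla^{(0)}=\nabla^{(1)}+\nabla^{(-1)}$.

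The hard part is really just the bookkeeping: making sure all three symmetries of $T$ (not just the pairwise one needed at each verification) get used in the right places, and confirming $C^\infty(M)$-linearity of $K$ before asserting that the perturbed objects are connections. Once $K$ is set up cleanly via the musical isomorphism induced by $g$, each of the three verifications (affine connection axioms, duality, torsion-freeness) is a one-line calculation from the corresponding symmetry of $T$.
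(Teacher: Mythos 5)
Your proposal is correct and follows essentially the same route as the paper: both convert $T$ into a $(1,2)$-tensor via the metric and define $\nabla^{(1)}=\nabla^{(0)}-\tfrac{1}{2}K$, $\nabla^{(-1)}=\nabla^{(0)}+\tfrac{1}{2}K$, deriving duality from the metric property of $\nabla^{(0)}$ plus symmetry of $T$ in its last two slots, and torsion-freeness from symmetry in the first two. The only cosmetic difference is that the paper first carries out the verifications for an arbitrary auxiliary connection $\bar{\nabla}$ and specializes to the Levi-Civita connection at the end, whereas you fix $\nabla^{(0)}$ from the start (and you are slightly more careful in making explicit, via non-degeneracy of $g$, the passage from $T$ to $K$, which the paper leaves implicit).
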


\begin{proof} The idea is to use an auxiliar connection to construct two connections, and conditions from the auxiliar translate to the new ones. Let us denote by $\Bar{\nabla}$ an auxiliary connection and define two new connections:
     \begin{align}\label{defconexTensor}
          \nabla_{Z}X = \Bar{\nabla}_{Z}X-\frac{1}{2}\mathcal{T}(Z,X),\quad \nabla^*_{Z}X = \Bar{\nabla}_{Z}X+\frac{1}{2}\mathcal{T}(Z,X).
     \end{align}
It is a straightforward computation to verify that are $\mathbb{R}$-linear. So it remains to study the behaviour with respect to product with $f\in C^\infty(M)$. We give the proof for $\nabla$ and for $\nabla^*$ the computation is the same.
\begin{align*}
        \nabla_{Z}(fX) &=\Bar{\nabla}_{Z}(fX)-\frac{1}{2}\mathcal{T}(Z,fX)=
        f(\Bar{\nabla}_{Z}X-\frac{1}{2}\mathcal{T}(Z,X))+Z(f)X=f\nabla_ZX+Z(f)X \\
        \nabla_{fZ}(X)&=\Bar{\nabla}_{fZ}(X)-\frac{1}{2}\mathcal{T}(fZ,X)=f(\Bar{\nabla}_{Z}X-\frac{1}{2}\mathcal{T}(Z,X))=f\nabla_ZX .
    \end{align*}
Then, $\nabla$ and  $\nabla^*$ are affine connection. 
Now, we study the torsion tensor of $\nabla$:
\begin{align*}
     \nabla_{Z}X-\nabla_{X}Z-[Z,X] &=\Bar{\nabla}_{Z}X-\frac{1}{2}\mathcal{T}(Z,X)-(\Bar{\nabla}_{X}Z-\frac{1}{2}\mathcal{T}(X,Z))-[Z,X]\\
      &=\Bar{\nabla}_{Z}X+\Bar{\nabla}_{X}Z-\frac{1}{2}(\mathcal{T}(X,Z)-\mathcal{T}(Z,X))-[Z,X]\\
      &=\Bar{\nabla}_{Z}X-\Bar{\nabla}_{X}Z-[Z,X].
\end{align*}
And a similar computation for the torsion of the tensor $\nabla^*$ gives us $\nabla^*_{Z}X-\nabla^*_{X}Z-[Z,X] =\Bar{\nabla}_{Z}X-\Bar{\nabla}_{X}Z-[Z,X]$. Thus, if we impose that $\Bar{\nabla}$ is torsion-free, we also have same property for both $\nabla$ and $\nabla^*$.

The duality condition follows a similar argument, just by noticing that.
\begin{align*}
    g(\nabla_{Z}X,Y)+g(X,\nabla^{*}_{Z}Y) &=[g(\bar{\nabla}_{Z}X,Y)-\frac{1}{2}T(Z,X,Y)]+[g(X,\bar{\nabla}_{Z}Y)+\frac{1}{2}T(Z,Y,X)]\\ &=g(\bar{\nabla}_{Z}X,Y)+g(X,\bar{\nabla}_{Z}Y)-\frac{1}{2}T(Z,X,Y)+\frac{1}{2}T(Z,Y,X)\\    &=g(\bar{\nabla}_{Z}X,Y)+g(X,\bar{\nabla}_{Z}Y).
\end{align*}
Therefore, if it is also assumed that $\Bar{\nabla}$ is a metric connection, we have:
\begin{align*}
    g(\nabla_{Z}X,Y)+g(X,\nabla^{*}_{Z}Y) &=Zg(X,Y).
\end{align*}
From previous discussion, if we choose $\Bar{\nabla}$ as the Levi-Civita connection for $(M,g)$, then the symmetric tensor $\mathcal{T}$ yields (from \eqref{defconexTensor}) $(M,g, \nabla^{(1)}=\nabla, \nabla^{(-1)}=\nabla^{*})$ a statisical manifold.
\end{proof}

Furthermore, these two procedures are inverse of each other, since given the dual structure $(\nabla,\nabla^*)$ regarding the metric $g$ we have $\mathcal{T}=\nabla^{*}-\nabla$, the statistical structure ($g, \mathcal{T}$) and this produces the dual structure, as follows:
\begin{align*}
    \nabla'=\nabla^{(0)}-\frac{1}{2}\mathcal{T}=\frac{1}{2}\nabla+\frac{1}{2}\nabla^{*}-\frac{1}{2}(\nabla^{*}-\nabla)=\nabla
\end{align*}
\begin{align*}
    \nabla'^{*}=\nabla^{(0)}+\frac{1}{2}\mathcal{T}=\frac{1}{2}\nabla+\frac{1}{2}\nabla^{*}+\frac{1}{2}(\nabla^{*}-\nabla)=\nabla^{*}
\end{align*}
On the other hand: $(g,\mathcal{T})$ produces $(\nabla, \nabla^{*})=(\nabla^{(0)}-\frac{1}{2}\mathcal{T}, \nabla^{(0)}+\frac{1}{2}\mathcal{T})$ in turn gives $$\mathcal{T}'=(\nabla^{(0)}+\frac{1}{2}\mathcal{T})-(\nabla^{(0)}-\frac{1}{2}\mathcal{T})=\mathcal{T}$$

In conclusion,

\begin{theorem}\label{thm:3ten=dualst}
For any Riemannian metric $(M,g)$, there is a one-to-one relation between torsion-free dual structures (statistical manifold) and statistical structure.
\end{theorem}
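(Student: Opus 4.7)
The plan is to package the two constructions of Propositions \ref{dual-estructura} and \ref{corola-gytdefEstruDual} as maps $\Phi$ and $\Psi$ and verify they are mutually inverse. Define
\[
\Phi(\nabla^{(1)},\nabla^{(-1)})(X,Y,Z) = g(\nabla^{(-1)}_X Y - \nabla^{(1)}_X Y, Z),
\]
which is a symmetric 3-tensor by Proposition \ref{dual-estructura}; and define $\Psi(T) = (\nabla^{(0)} - \tfrac{1}{2}\mathcal{T}, \nabla^{(0)} + \tfrac{1}{2}\mathcal{T})$, where $\mathcal{T}$ is the $(1,2)$-tensor dual to $T$ under $g$ (that is, $g(\mathcal{T}(X,Y),Z) = T(X,Y,Z)$) and $\nabla^{(0)}$ is the Levi--Civita connection. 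By Proposition \ref{corola-gytdefEstruDual}, applied with the auxiliary connection chosen to be $\nabla^{(0)}$, this produces a torsion-free dual pair.

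First I would check $\Phi \circ \Psi = \mathrm{id}$: if $\Psi(T) = (\nabla,\nabla^*)$, then $\nabla^* - \nabla = \mathcal{T}$ by construction, so $\Phi(\nabla,\nabla^*)(X,Y,Z) = g(\mathcal{T}(X,Y),Z) = T(X,Y,Z)$.

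Next I would check $\Psi \circ \Phi = \mathrm{id}$, which is the step where the hypotheses really come in. Starting from a torsion-free dual structure $(\nabla^{(1)},\nabla^{(-1)})$, Theorem \ref{estructuradualconexiAlpha}(3) yields $2\nabla^{(0)} = \nabla^{(1)} + \nabla^{(-1)}$. Setting $\mathcal{T} = \nabla^{(-1)} - \nabla^{(1)}$, an elementary algebraic rearrangement gives
\[
\nabla^{(0)} - \tfrac{1}{2}\mathcal{T} = \tfrac{1}{2}(\nabla^{(1)}+\nabla^{(-1)}) - \tfrac{1}{2}(\nabla^{(-1)}-\nabla^{(1)}) = \nabla^{(1)},
\]
and similarly $\nabla^{(0)} + \tfrac{1}{2}\mathcal{T} = \nabla^{(-1)}$, so $\Psi(\Phi(\nabla^{(1)},\nabla^{(-1)})) = (\nabla^{(1)},\nabla^{(-1)})$.

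The only real obstacle is conceptual rather than computational: one must use the \emph{Levi--Civita} connection as the auxiliary $\bar{\nabla}$ in the construction of $\Psi$ (rather than any torsion-free metric connection, which would suffice for Proposition \ref{corola-gytdefEstruDual} alone). This choice is forced by Theorem \ref{estructuradualconexiAlpha}(3), which identifies $\nabla^{(0)}$ as the symmetric average of any torsion-free dual pair; without this identification the two constructions would not line up as inverses. Once this observation is in place the two verifications are short identities, and the bijection is established.
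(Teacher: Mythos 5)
Your proposal is correct and follows essentially the same route as the paper: the paper likewise verifies that the two constructions of Propositions~\ref{dual-estructura} and~\ref{corola-gytdefEstruDual} are mutually inverse, computing $\nabla^{(0)}-\tfrac{1}{2}\mathcal{T}=\nabla^{(1)}$ and $\nabla^{(0)}+\tfrac{1}{2}\mathcal{T}=\nabla^{(-1)}$ via Theorem~\ref{estructuradualconexiAlpha}(3), and recovering $\mathcal{T}$ as the difference of the constructed pair. One small remark: your parenthetical contrast between the Levi--Civita connection and ``any torsion-free metric connection'' is vacuous, since by the fundamental theorem of Riemannian geometry these coincide --- but this does not affect the argument.
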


\subsection{Isostatistical immersion}\label{sec:isoest}

In the previous section, we conclude that there is a one-to-one relation between statistical structures and statistical manifolds, but both of them are motivated by the statistical model $(\mathcal{P}_+(I),g)$. A natural question arises when we want to see any statistical structure as a statistical model. 
For this question, we should consider the following notion:

\begin{definition} \cite[Definition 4.9]{jost}  
    Let $h$ be a smooth application of a statistical manifold $(M_{1}, g_{1},T_{1})$ to a statistical manifold $(M_{2}, g_{2},T_{2})$. The map $h$ will be called \textbf{isostatistical immersion} if it is an immersion of $M_{1}$ into $M_{2}$ such that $g_{1}=h^{*}(g_{2})$ and $T_{1}=h^{*}(T_{2})$.
    \end{definition}

A first result related to this notion is an original extension of the statement in \cite[Lemma~4.6]{jost} for Fisher metric. It is important to highlight that \cite[Lemma~4.6]{jost} just prove item a., and in this note we give a proof for the rest of the claims:

\begin{proposition}\label{prop:nueva1}
    Let $\Omega$ be a measurable space and  $p_{i}:\Omega \times M_{i}\longrightarrow [0,1]$ measurable functions with $\int_{\Omega \times M_i}p_i(x;\xi_i) dx=1$ (for $i=1,2$), such that $h:(M_{1}, g_{1},T_{1})\longrightarrow (M_{2}, g_{2},T_{2})$ is an isostatistical immersion between this two statistical structures, then we have:
\begin{itemize}
    \item[a.] If $g_1$ and $g_2$ are Fisher metrics, then $h^{*}p_{2}(x;\xi_{2})=p_{1}(x;\xi_{1})$.
    \item[b.] If $h^{*}p_{2}(x;\xi_{2})=p_{1}(x;\xi_{1})$, and $g_2$ is Fisher metric, then $g_{1}$ It is also a Fisher metric.
    \item[c.] If $h^{*}p_{2}(x;\xi_{2})=p_{1}(x;\xi_{1})$, and  $T_{2}$ is Amari–Chentsov tensor, then $T_{1}$ is Amari–Chentsov tensor.
\end{itemize}

\end{proposition}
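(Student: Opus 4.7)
The plan is to prove parts (b) and (c) by direct computations based on the chain rule, and to invoke \cite[Lemma~4.6]{jost} for part (a). The single hypothesis I use in both cases is that $h^{*}p_{2}(x;\xi_{2})=p_{1}(x;\xi_{1})$, which in concrete terms reads $p_{1}(x;\xi_{1})=p_{2}(x;h(\xi_{1}))$. From this I immediately get two ingredients: the chain-rule identity
\[
\frac{\partial \log p_{1}}{\partial \xi_{1}^{a}}(x;\xi_{1}) \;=\; \sum_{i}\frac{\partial \log p_{2}}{\partial \xi_{2}^{i}}(x;h(\xi_{1}))\,\frac{\partial h^{i}}{\partial \xi_{1}^{a}}(\xi_{1}),
\]
valid in any choice of local coordinates $(\xi_{1}^{a})$ on $M_{1}$ and $(\xi_{2}^{i})$ on $M_{2}$, together with the coincidence of expectations $E_{p_{1}(\cdot;\xi_{1})}[\,\cdot\,]=E_{p_{2}(\cdot;h(\xi_{1}))}[\,\cdot\,]$.

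For (b), I would substitute this chain-rule identity into the expression \eqref{metricafisheresperanza} for the Fisher metric associated to $p_{1}$. The Jacobians of $h$ factor out and repackage the input tangent vectors into $dh(X),dh(Y)$, and the resulting expectation over $p_{1}(\cdot;\xi_{1})$ equals the expectation over $p_{2}(\cdot;h(\xi_{1}))$. What remains is precisely $g_{2,h(\xi_{1})}(dh(X),dh(Y))=(h^{*}g_{2})_{\xi_{1}}(X,Y)$. Since $h$ is an isostatistical immersion, $h^{*}g_{2}=g_{1}$, so $g_{1}$ agrees pointwise with the Fisher metric of $p_{1}$, as required.

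For (c), the argument is structurally identical but uses three chain-rule factors instead of two. Starting from the Amari--Chentsov formula and applying the chain rule to each of the three logarithmic derivatives, the three Jacobians convert $X,Y,Z\in T_{\xi_{1}}M_{1}$ into $dh(X),dh(Y),dh(Z)\in T_{h(\xi_{1})}M_{2}$, and the expectation of the cubic expression against $p_{1}(\cdot;\xi_{1})$ equals the corresponding expectation against $p_{2}(\cdot;h(\xi_{1}))$. This shows that the Amari--Chentsov tensor built from $p_{1}$ coincides with $h^{*}$ of the Amari--Chentsov tensor built from $p_{2}$, which in turn equals $h^{*}T_{2}=T_{1}$ by the isostatistical hypothesis.

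The main obstacle is not conceptual but notational: keeping track of which variable is being differentiated (coordinates on $M_{1}$ versus $M_{2}$, or the sample variable $x$) and justifying the exchange of differentiation and integration implicit in the defining formulas for Fisher and Amari--Chentsov tensor. Once this is set up, both (b) and (c) become transparent functoriality statements: the Fisher metric and Amari--Chentsov constructions are natural with respect to reparametrization by $h$, and the hypothesis $h^{*}p_{2}=p_{1}$ is exactly what makes this functoriality applicable.
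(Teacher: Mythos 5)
Your proposal is correct and follows essentially the same route as the paper: part (a) is deferred to \cite[Lemma~4.6]{jost}, and parts (b) and (c) are obtained by pulling back the Fisher and Amari--Chentsov expectation formulas along $h$ and substituting $h^{*}p_{2}=p_{1}$, which is exactly the paper's computation $E_{h^{*}p_{2}}[\cdots]=E_{p_{1}}[\cdots]$ written with the chain rule made explicit. The only difference is presentational: you spell out the Jacobian factors and the exchange of differentiation and integration that the paper's terse two-line proof leaves implicit.
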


\begin{proof}

As we comment before the statement of the proposition, we just prove the second and third claim:
\begin{itemize}
    \item[b.] Given $h^*g_2=g_1$ and Fisher metric $g_{2}$, we have,
$$g_{1}(V_{1}, V_{2})=E_{h^{*}p_2}\Big[\frac{\partial}{\partial V_{1}}\log h^{*}p_{2}(x; \xi_{2})\frac{\partial}{\partial V_{2}}\log h^{*}p_{2}(x;\xi_2)\Big]=E_{p_1}\Big[\frac{\partial}{\partial V_{1}}\log p_{1}(x; \xi_{1})\frac{\partial}{\partial V_{2}}\log p_{1}(x;\xi_1)\Big],$$
then $g_{1}$ is Fisher metric.

\item[c.] Given $h^*T_2=T_1$ and $T_1$ Amari–Chentsov tensor, we have
\begin{align*}
     T_{1}(V_1, V_2,V_3)  &=E_{h^{*}p_{2}}\Big[\frac{\partial}{\partial V_{1}}\log h^{*}p_{2}(x;\xi_{2}) \frac{\partial}{\partial V_{2} }\log h^{*}p_{2}(x;\xi_{2}) \frac{\partial}{\partial V_3 }\log h^{*}p_{2}(x;\xi_{2})\Big]\\ 
     &=E_{p_{1}}\Big[\frac{\partial}{\partial V_{1}}\log p_{1}(x;\xi_{1}) \frac{\partial}{\partial V_{2} }\log p_{1}(x;\xi_{1}) \frac{\partial}{\partial V_3 }\log p_{1}(x;\xi_{1})\Big],
\end{align*}

then $T_{1}$ is Amari–Chentsov tensor.
\end{itemize}
\end{proof}
Now, we present (without proof) the representation theorem for statistical manifolds that gives a positive answer to the initial question of this section:

\begin{theorem}(Existence of isostatistical immersion \cite[Theorem 4.10]{jost})
Any smooth compact statistical manifold $(M,g,T)$ admits an isostatistical immersion $h$ into the statistical manifold $(\mathcal{P}_{+}(I), \mathbf{g},\mathbf{T})$, for some finite set $I$. 
\end{theorem}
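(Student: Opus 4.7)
The target is to construct, for some finite $I$, an immersion $h : M \hookrightarrow \mathcal{P}_+(I)$ satisfying $h^* \mathbf{g} = g$ and $h^* \mathbf{T} = T$. By Proposition~\ref{prop:nueva1}(b) and (c), this is equivalent to producing a family of probability distributions $p(\cdot;\xi)$ on a finite set $I$, depending smoothly on $\xi \in M$, whose induced Fisher metric coincides with $g$ and whose induced Amari-Chentsov tensor coincides with $T$. The problem thus reduces to a parametric statistical construction, and my plan is to assemble such a model locally using compactness and then glue via a partition of unity.

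First I would exploit compactness: choose a finite atlas $\{(U_\alpha,\phi_\alpha)\}_{\alpha=1}^N$ trivializing $M$ together with a smooth subordinate partition of unity $\{\rho_\alpha\}$. The raw map $\xi \mapsto (\rho_1(\xi),\dots,\rho_N(\xi)) \in \mathcal{P}(\{1,\dots,N\})$ is in general neither an immersion nor isostatistical, so I would enrich the alphabet by attaching to each $\alpha$ an auxiliary finite set $I_\alpha$ and a smooth family of probability distributions $q_\alpha(\cdot;\xi)$ on $I_\alpha$. Setting $I = \bigsqcup_\alpha I_\alpha$ and
\begin{equation*}
p(i,\alpha;\xi) = \rho_\alpha(\xi)\, q_\alpha(i;\xi),
\end{equation*}
one obtains a smooth map $h : M \to \mathcal{P}_+(I)$ whose value at $\xi$ is a convex combination, over $\alpha$, of the local statistical models defined by $q_\alpha$.

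Within each chart $U_\alpha$, I would design $q_\alpha(\cdot;\xi)$ so that its induced Fisher metric reproduces $g|_{U_\alpha}$ and its induced Amari-Chentsov tensor reproduces $T|_{U_\alpha}$. Matching the Fisher metric alone is the classical statement that any Riemannian metric on a compact domain admits a local embedding into a sufficiently high-dimensional simplex with Fisher metric; the new difficulty is to match simultaneously the symmetric 3-tensor $T$. Here I would use that increasing $|I_\alpha|$ adds independent degrees of freedom to the triples
\begin{equation*}
\partial_j \log q_\alpha \, \partial_k \log q_\alpha \, \partial_l \log q_\alpha,
\end{equation*}
while leaving the quadratic conditions determining $g$ in a lower-dimensional sub-system, so for $|I_\alpha|$ large enough both conditions can be solved simultaneously by an implicit-function-type argument.

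Finally, I would verify that the partition-of-unity gluing does not destroy the local matching. Since $\sum_\alpha \rho_\alpha \equiv 1$, the pull-backs of $\mathbf{g}$ and $\mathbf{T}$ under the global model decompose into contributions from each chart plus cross-terms involving derivatives of $\rho_\alpha$; these cross-terms can be absorbed by a small perturbation of the $q_\alpha$ on overlaps, using again the extra freedom provided by enlarging $I_\alpha$. The main obstacle throughout is the \emph{simultaneous} pull-back requirement for $g$ and $T$: one cannot simply invoke a metric embedding theorem, and the crux of the argument is the dimension count showing that a single finite $I$ carries enough statistical degrees of freedom to realize both the Riemannian and the cubic tensorial data of the compact statistical manifold.
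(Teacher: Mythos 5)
The paper itself offers no proof of this theorem --- it is explicitly presented ``without proof,'' citing \cite[Theorem~4.10]{jost} (a deep result of L\^e), so your proposal must be judged on its own merits, and it has genuine gaps at precisely the points where the real difficulty lives. First, a concrete structural error: with $p(i,\alpha;\xi)=\rho_\alpha(\xi)\,q_\alpha(i;\xi)$ and $\{\rho_\alpha\}$ a partition of unity, the coordinates $p(i,\alpha;\xi)$ vanish for $\xi$ outside the support of $\rho_\alpha$, so $h$ lands in the boundary stratum of the simplex and not in $\mathcal{P}_+(I)$, which the statement (and the definition of statistical model in the paper) requires; repairing this by mixing in a uniform component destroys the exact equalities $h^*\mathbf{g}=g$, $h^*\mathbf{T}=T$. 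Second, the cross-terms you propose to ``absorb by a small perturbation'' are not small: for a mixture model, $\partial_j\log p$ contains $\partial_j\log\rho_\alpha$, so the pulled-back Fisher metric is $\sum_\alpha \rho_\alpha\,g_{q_\alpha}$ \emph{plus} the Fisher information of the mixing weights (and $\mathbf{T}$ acquires an analogous cubic contribution); these terms are fixed by the chosen partition of unity, are of order one, and in fact blow up like $|\partial\rho_\alpha|^2/\rho_\alpha$ near the boundaries of the supports. At best you could try to make the $q_\alpha$ induce $g$ minus a prescribed positive-semidefinite correction, whose positive-definiteness is not guaranteed, so the gluing scheme fails as stated.

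Third, and most fundamentally, the local step you defer to ``an implicit-function-type argument'' with a dimension count is the actual crux of the theorem. Writing $\ell_i=\log q_\alpha(i;\cdot)$, you must solve the underdetermined nonlinear system
\begin{equation*}
\sum_i q_i\,\partial_j\ell_i\,\partial_k\ell_i=g_{jk},\qquad
\sum_i q_i\,\partial_j\ell_i\,\partial_k\ell_i\,\partial_l\ell_i=T_{jkl},
\end{equation*}
and counting unknowns against equations does not give solvability: this is a system of PDEs in $\xi$, the linearization is not obviously surjective, and one faces the same loss-of-derivatives obstruction that defeats the naive implicit function theorem in Nash's isometric embedding problem. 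The known proof (L\^e's, reproduced as \cite[Theorem~4.10]{jost}) does not glue local models with a partition of unity at all; it first realizes $(g,T)$ by an immersion into a linear space carrying canonical quadratic and cubic tensors --- using Nash's isometric embedding theorem for the metric part and Gromov's h-principle theorem on maps inducing a prescribed symmetric cubic form for the tensor part --- and only then maps into the probability simplex. Your reduction via Proposition~\ref{prop:nueva1} to ``find a statistical model with prescribed $(g,T)$'' is the right reformulation, but the construction you sketch after that reduction would not go through without importing exactly this Nash--Gromov machinery.
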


Consequently, although the metric $g$ in $M$ is not a Fisher metric, it is isometric to $g_h$ in $h(M)$ that is isostatistical to $\mathbf{g}$ in $\mathcal{P}_{+}(I)$; however we do not have a {\it fauthfully } representation because there are geoeomtrical structure that are not preserved by this representation as we will comment at the end of the following section.

\begin{rmk}\cite[Theorem 4.10]{jost}
   Any non-compact statistical manifold $(M,g,T)$ admits an immersion $h$ into the space $\mathcal{P}_{+}(\mathbb{N})$ of all positive probability measures on the set $\mathbb{N}$ of all natural numbers such that $g$ is isometric to the induced Fisher metric on $h(M)$ as finite dimensional submanifold in $\mathcal{P}_{+}(\mathbb{N})$.
\end{rmk}

\subsection{Canonical divergence}\label{sec:can.div}
A divergence function is a non negative function $D:M\times M\to \R$ so that $D|_\Delta=0$\footnote{By $\Delta$ we mean the diagonal manifold inside $M\times M$} and for any two vector fields $X,Y$ in $M$ we get $X^1Y^2D|_\Delta >0$ where the superscript $X^1$ and $Y^2$ represent the lifting to $M\times \{q\}$ and $\{p\}\times M$ respectively.  We will say that a divergence function {\it generates} the structure $(g,\cc^{(1)},\cc^{(-1)},T)$ if $g=g^{(D)}$, $(\cc^{(1)}=\cc^{(D)},\cc^{(-1)}=\cc{^{(D^*)}})$ and $T=T^{(D)}$ for the following relations:

\begin{align}    \label{metricdiver}
    g^{(D)}(V,W)|_{p}&:=-D(V\| W)(p),\\ \label{cc-diver}
    g^{(D)}(\nabla_{V}^{(D)}W,Z)&:=-D(VW\|Z), \\ \label{dc-diver}
    g^{(D)}(\nabla_{V}^{(D^{*})}W,Z)&:=-D^{*}(VW\|Z), \mbox{\ with\ } D^*(p,q)=D(q,p),\\ \label{T-diver}
    T^{D}(X,Y,Z)&:=-D(XY\|Z)+D(Z\|XY).\\ \notag
\end{align}

where we are using the usual notation:

\begin{equation*}
    D(V_{1} \cdots V_{n} \| W_{1} \cdots W_{m})(p) := (V_{1})^{1}  \cdots (V_{n})^{1}(W_{1})^{2}\cdots (W_{m})^{2}D|_{\Delta}.
\end{equation*}

The problem of the existence of divergence function that generates a statistical manifold (and also the statistical structure) is already solved by \cite[Amari et al]{amariMeth}., indeed there exist many of such functions. For more details and references we refer to \cite[section 4.4]{jost}.

As a consequence of the isostatistical immersion theorem, we have the following corollary:
\begin{corollary}\label{Corolario-4.5}
\cite[Corollary 4.5]{jost}
 For any statistical manifold $(M, g,T)$ we can find a divergence $D$ of $M$ which defines $g$ and $T$ by the formulas  ~\eqref{metricdiver}-\eqref{T-diver}.

\end{corollary}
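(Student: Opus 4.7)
The plan is to deduce Corollary~\ref{Corolario-4.5} as a direct application of the isostatistical immersion theorem stated just above it, combined with the already established existence of generating divergences on the model space $\mathcal{P}_+(I)$. The strategy is to transport a known divergence from the ambient space down to $M$ via pullback, and then verify that the three defining identities \eqref{metricdiver}, \eqref{cc-diver}-\eqref{dc-diver}, and \eqref{T-diver} descend under pullback because $h$ is an isostatistical immersion.

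First I would invoke the existence of isostatistical immersion: pick $h\colon(M,g,T)\hookrightarrow(\mathcal{P}_+(I),\mathbf{g},\mathbf{T})$ (using the compact version, or the $\mathcal{P}_+(\mathbb{N})$ version in the non-compact case, as in the remark). By definition $h^*\mathbf{g}=g$ and $h^*\mathbf{T}=T$. Next, I would invoke the result of Amari et al.\ recalled just before the corollary, which guarantees a divergence $\mathbf{D}$ on $\mathcal{P}_+(I)\times\mathcal{P}_+(I)$ generating the structure $(\mathbf{g},\nabla^{(e)},\nabla^{(m)},\mathbf{T})$ in the sense of \eqref{metricdiver}-\eqref{T-diver}. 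The candidate divergence on $M$ is then
\begin{equation*}
D(p,q):=\mathbf{D}(h(p),h(q)),\qquad p,q\in M.
\end{equation*}

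The verification proceeds in three steps. Non-negativity and vanishing on the diagonal are immediate since $h(p)=h(p)$ forces $D(p,p)=\mathbf{D}(h(p),h(p))=0$. For the generating identities, the key observation is that if $V$ is a vector field on $M$ and $V^1$ denotes its lift to the first factor of $M\times M$, then the chain rule gives $V^1 D\big|_{\Delta}=(dh(V))^1 \mathbf{D}\big|_{\Delta_{\mathcal{P}}}$ (and similarly for the second slot), because $(h\times h)$ sends $\Delta_M$ into $\Delta_{\mathcal{P}}$. Iterating this observation for higher mixed derivatives $V^1W^2$, $V^1W^1Z^2$, etc., yields
\begin{equation*}
D(V_1\cdots V_n\|W_1\cdots W_m)=\mathbf{D}(dh(V_1)\cdots dh(V_n)\|dh(W_1)\cdots dh(W_m)),
\end{equation*}
so the metric identity \eqref{metricdiver} for $D$ reduces to the same identity for $\mathbf{D}$ evaluated at $dh(V),dh(W)$, and then to $\mathbf{g}(dh(V),dh(W))=h^*\mathbf{g}(V,W)=g(V,W)$. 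The same argument, applied to the three-variable expression in \eqref{T-diver}, recovers $T(X,Y,Z)$ from $\mathbf{T}(dh(X),dh(Y),dh(Z))$ using $h^*\mathbf{T}=T$.

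The remaining subtle point, which I regard as the main obstacle, is showing that the induced connection from $D$ agrees with the prescribed torsion-free dual pair on $M$. Once $g$ and $T$ are recovered, Theorem~\ref{thm:3ten=dualst} identifies the statistical structure $(g,T)$ with a unique torsion-free dual pair, so it suffices to check that $(\nabla^{(D)},\nabla^{(D^*)})$ produced by \eqref{cc-diver}-\eqref{dc-diver} is torsion-free and dual with respect to $g$ (both follow formally from the symmetry properties of the mixed partial derivatives of $D$ on the diagonal, combined with the vanishing of $D|_\Delta$), and that its associated $3$-tensor $\nabla^{(D^*)}-\nabla^{(D)}$ equals $T$; the latter is exactly the content of \eqref{T-diver}, already verified above. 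The one-to-one correspondence of Theorem~\ref{thm:3ten=dualst} then forces $(\nabla^{(D)},\nabla^{(D^*)})$ to coincide with the dual pair associated to $(g,T)$, completing the proof.
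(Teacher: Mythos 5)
Your proposal is correct and follows essentially the same route as the paper, which states this corollary precisely as a consequence of the isostatistical immersion theorem: pull back a generating divergence from $(\mathcal{P}_{+}(I),\mathbf{g},\mathbf{T})$ along the immersion $h$, recover $g$ and $T$ from the pullback identities, and let the one-to-one correspondence of Theorem~\ref{thm:3ten=dualst} pin down the dual connections. The only point to polish is that $dh(V_i)$ is merely a vector field along $h$, so your higher-derivative identity $D(V_1\cdots V_n\|W_1\cdots W_m)=\mathbf{D}(dh(V_1)\cdots\|\cdots)$ should be interpreted via local extensions (an immersion is locally an embedding, and the specific combinations in \eqref{metricdiver}--\eqref{T-diver} evaluated on the diagonal do not depend on the choice of extension), which leaves your argument intact.
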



A related question is if there exists a natural choice among all of these divergence functions. The answer comes as {\bf canonical divergence}, that, in addition to the definition of divergence must satisfies: 
\begin{enumerate}
\item $D$ generates the dualistic structure $(g,\nabla^{(1)},\nabla^{(-1)})$,
\item $D$ is one half of the squared Riemannian distance, i.e. $2D(p,q) =d(p,q)^2$, when the statistical manifold is self-dual, namely when $\nabla^{(1)},\nabla^{(-1)}$ are equal and coincide with the Levi–Civita connection,
\item $D$ is the canonical divergence, when $(M,g,\nabla^{(1)},\nabla^{(-1)} )$ is dually flat; this is \textit{Bregman divergence.}\footnote{\textbf{Bergman divergence}\cite[section 4.6]{AmariCichocki} is a more general class of divergences, which includes KL divergence as a special case. It is based on a convex function and can be applied to broader vector spaces beyond probability distributions.}
\end{enumerate}
It was computed that canonical divergence is induced by the geodesics in the following way: On a manifold $M$ which has the associated connection  $\nabla$ concerning the metric $g$, given a pair of (closed enough) points $q,p\in M$, there exists a unique curve $\nabla-$geodesic.
$$\gamma_{q,p}:[0,1]\longrightarrow M,$$ 
which satisfies $\gamma_{q,p}(0)=p$ y $\gamma_{q,p}(1)=q$.
\begin{rmk}
This is equivalent to saying that for each pair of points $q$ and $p$ there exists a unique vector $X(q,p)\in T_{q}M$ that satisfies $\exp_{q}(X(q,p))=p$ where $\exp$ denotes the exponential map associated with $\nabla$.
\end{rmk}

\begin{theorem}\cite[section 4.4.2]{jost}
    Giving an affine connection $\cc$ and a metric $g$ on $M$, it is possible to define the  \textbf{canonical divergence} $D:M\times M\to \mathbb{R}$ associated to  $(g,\cc)$ as:
    \begin{equation}\label{divCanonica}
      D(p\|q)=\int_{0}^{1} t\| \dot{\gamma}_{p,q}(t)\|^{2}dt,  
    \end{equation}
     
     for $\gamma$ the geodesic with initial point $p$ and ends in $q$. In similar way, it also defines the dual canonical divergence as $D^*(p\|q):=D(q\|p).$
\end{theorem}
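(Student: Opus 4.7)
The plan is to verify the canonical-divergence properties of $D(p\|q)=\int_0^1 t\|\dot\gamma_{p,q}(t)\|^2\,dt$: that it is a well-defined non-negative function vanishing on the diagonal, that it generates the structure $(g,\cc)$ through the Hessian formulas \eqref{metricdiver}--\eqref{cc-diver}, and that in the self-dual and dually flat special cases it reduces to the expected squared-distance and Bregman expressions. Using the exponential map $\exp_p$ of $\cc$ on a normal neighborhood $U_p$ (existence guaranteed by the local existence of geodesics cited earlier), I would parametrize each $q\in U_p$ as $q=\exp_p(v)$ with $v\in T_pM$, so that $\gamma_{p,q}(t)=\exp_p(tv)$ is the geodesic with the required boundary values and $\dot\gamma_{p,q}(0)=v$. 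The integrand is smooth in both $p$ and $q$, so $D$ is a smooth function on a neighborhood of the diagonal of $M\times M$; non-negativity follows from $g$ being Riemannian, and $D(p,p)=0$ from the geodesic collapsing to the constant curve when $v=0$. The identity $D^*(p\|q)=D(q\|p)$ is the definition of $D^*$ rather than a claim to check.

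Next I would Taylor-expand the integrand in $v$ and $t$ to verify the generating formulas. The leading contribution is $g_p(v,v)$, which after integration against $t\,dt$ over $[0,1]$ yields $\tfrac{1}{2}g_p(v,v)$; applying two mixed derivatives $X^1Y^2$ at the diagonal recovers $g^{(D)}=g$ as in \eqref{metricdiver}. The next Taylor coefficient is linear in $t$ and cubic in $v$, mixing derivatives of $g$ with the second-order data of the geodesic equation for $\cc$; integrating in $t$ and differentiating a third time at the diagonal reproduces $-D(VW\|Z)=g(\cc_VW,Z)$, which is \eqref{cc-diver}. The same argument applied to $D^*$ gives the dual connection $\cc^*$ and, via \eqref{T-diver}, the Amari--Chentsov tensor.

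The two special cases then follow from the formula. When $\cc=\cc^{(0)}$ is Levi-Civita, metric compatibility makes $\|\dot\gamma\|$ constant along every geodesic, so
\begin{equation*}
D(p\|q)=\|\dot\gamma(0)\|^2\int_0^1 t\,dt=\tfrac{1}{2}\|v\|^2=\tfrac{1}{2}d(p,q)^2.
\end{equation*}
In the dually flat case, passing to $\cc$-affine coordinates $\theta$ makes $\gamma_{p,q}$ an affine segment with $\dot\gamma(t)\equiv \theta(q)-\theta(p)$ in those coordinates; inserting the identity $g_{ij}=\partial_i\partial_j\psi$ for the dual potential $\psi$ into the integral produces the standard Bregman-divergence expression built from $\psi$.

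The main obstacle is the coordinate-free matching of the cubic Taylor coefficient of $D$ to \eqref{cc-diver}. Expanded directly, this coefficient mixes covariant derivatives of $g$ with the connection coefficients of $\cc$ along the geodesic, and keeping the bookkeeping without reintroducing Christoffel symbols (in line with the stated coordinate-free philosophy of the paper) will likely require either a geodesic-variation argument or a judicious use of $\cc$-normal coordinates at $p$, where the Christoffel symbols vanish at the origin and the third-order derivative of $D$ at the diagonal simplifies to the desired intrinsic expression.
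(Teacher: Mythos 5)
The paper itself contains no proof of this statement---it is imported with the citation \cite[Section 4.4.2]{jost}---so your proposal can only be measured against the argument in that source, which your plan broadly mirrors: parametrize $q=\exp_p(v)$ via the exponential map of $\cc$, check the divergence axioms, and identify the induced structure by Taylor expansion at the diagonal. The parts you actually carry out are correct: smoothness and positivity on a normal neighborhood, vanishing on the diagonal, the recovery of $g$ from the quadratic term $\tfrac12 g_p(v,v)$ consistently with the sign in \eqref{metricdiver}; the self-dual case, where metric compatibility makes $\|\dot\gamma\|$ constant so that $D=\tfrac12\|v\|^2=\tfrac12 d(p,q)^2$; and the dually flat case, where with $h(t)=\psi\bigl(\theta(p)+t\Delta\bigr)$, $\Delta=\theta(q)-\theta(p)$, one gets $\int_0^1 t\,h''(t)\,dt=h'(1)-h(1)+h(0)$, which is the Bregman divergence of the potential (up to the order-of-arguments convention). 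This last computation is also exactly the mechanism the paper alludes to afterwards when it writes $D(p\|q)=\Psi(p)+\varphi(q)-\vartheta^i(p)\eta_i(q)$ in the KL discussion.

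The genuine gap sits in the step that carries the whole content of the theorem: that the cubic coefficient reproduces \eqref{cc-diver} (and, via $D^*$, \eqref{dc-diver}). You assert that the expansion ``reproduces'' $-D(VW\|Z)=g(\cc_VW,Z)$ but defer the computation, and your sketch of the integrand obscures the actual mechanism. Since $\cc$ is not metric, $\|\dot\gamma\|^2$ is not constant along a $\cc$-geodesic; rather $\tfrac{d}{dt}\,g(\dot\gamma,\dot\gamma)=g(\dot\gamma,\dc_{\dot\gamma}\dot\gamma)$, where $\dc$ is the unique dual connection guaranteed by Lemma \ref{unicidadConexion}. So the linear-in-$t$ correction is not an unstructured mix of ``derivatives of $g$ with second-order geodesic data'': it is precisely where $\dc$ enters, and the orientation of the geodesic (the paper runs $\gamma$ from $p$ to $q$; the cited source runs it the other way, which swaps $D$ and $D^*$) decides whether the mixed third derivative at the diagonal lands on $\cc$ in \eqref{cc-diver} or on $\dc$ in \eqref{dc-diver}---exactly the bookkeeping you postpone. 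Until that derivative computation is executed (your own suggestion of $\cc$-normal coordinates at $p$ does work, since there only a second-order Taylor expansion of the metric coefficients along radial $\cc$-geodesics is needed), the claim that $D$ generates the dualistic structure---which is what distinguishes the canonical divergence from the merely existing generating divergences of Corollary \ref{Corolario-4.5}---remains unverified, so the proposal is a sound roadmap rather than a proof.
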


As a direct consequence, for the statistical model, we get:

\begin{corollary}\cite[Theorem 4.8]{jost}
The statistical model $(\mathcal{P}_{+}(I),g,\cc^{(e)},\cc^{(m)},T)$ coincides with 

$(\mathcal{P}_{+}(I),g^{(D)},
\cc^{(D)}, {\dc}^{(D^{*})}, T^{(D)})$.

\end{corollary}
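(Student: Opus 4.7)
The plan is to apply the general canonical divergence construction to the statistical model $(\mathcal{P}_{+}(I), g, \cc^{(e)}, \cc^{(m)})$ and verify that the four relations \eqref{metricdiver}--\eqref{T-diver} reproduce the original Fisher metric, the two dual connections, and the Amari--Chentsov tensor.

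First, I would identify the canonical divergence explicitly. Both $\cc^{(e)}$ and $\cc^{(m)}$ are flat on $\mathcal{P}_{+}(I)$, since the exponential and mixture charts furnish affine coordinates for each, so the triple $(g, \cc^{(e)}, \cc^{(m)})$ is dually flat. By property (3) listed before \eqref{divCanonica}, the canonical divergence in the dually flat case coincides with the Bregman divergence generated by the cumulant function $\psi$, which on $\mathcal{P}_{+}(I)$ reduces to the Kullback--Leibler divergence $D(p\|q)=\sum_{i} p_{i}\log(p_{i}/q_{i})$. Equivalently, one may verify this by computing \eqref{divCanonica} directly along a straight-line $\cc^{(e)}$-geodesic in the exponential coordinates $(\vartheta^{i})$, which by \eqref{componentesmatrizFisher} integrates the Hessian of $\psi$ to the Bregman form.

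Second, I would check the four identifications by direct differentiation of $D$ on the diagonal. For \eqref{metricdiver}, two derivatives of $D$ yield $-D(V\|W)|_{p}=\sum_{i} p_{i}(V\log p_{i})(W\log p_{i})$, which is the Fisher metric as in \eqref{metricafisheresperanza}, giving $g^{(D)}=g$. For \eqref{cc-diver} and \eqref{dc-diver}, a third differentiation of the KL divergence in the mixed pattern $(VW\|Z)$ must be compared with the local formulas \eqref{eq:e-p+conn} for $\cc^{(e)}$ and \eqref{eq:m-p+conn} for $\cc^{(m)}$; the asymmetry between $D$ and $D^{*}$ under the swap $p\leftrightarrow q$ is what separates the $e$-connection from the $m$-connection. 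For \eqref{T-diver}, the symmetrized third derivative of $D$ reproduces exactly \eqref{tensorAmariChentsovCoor}.

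Finally, as a cross-check bypassing the most tedious computation, once $g^{(D)}=g$ and (say) $\cc^{(D)}=\cc^{(e)}$ have been established, Lemma \ref{unicidadConexion} forces $\cc^{(D^{*})}=\cc^{(m)}$, and then Theorem \ref{thm:3ten=dualst} (the one-to-one correspondence between dual structures and statistical structures) yields $T^{(D)}=T$ automatically. The main obstacle is the bookkeeping of the third derivative against the two non-symmetric local formulas for $\cc^{(e)}$ and $\cc^{(m)}$: the $e$-connection carries the extra projection term $\sum_{i}g_{\mu}(A,B)\mu_{i}\delta_{i}$ in \eqref{eq:e-p+conn}, which must appear precisely as the contribution of the third logarithmic derivative of the KL divergence that arises from the constraint $\sum_{i}p_{i}=1$.
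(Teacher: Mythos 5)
Your proposal is correct and follows essentially the same route as the paper, which proves this corollary by identifying the canonical divergence of the dually flat structure $(g,\cc^{(e)},\cc^{(m)})$ on $\mathcal{P}_{+}(I)$ with the KL-divergence in its Bregman form $D(p\|q)=\Psi(p)+\varphi(q)-\vartheta^i(p)\eta_i(q)$ and then invoking the generating relations \eqref{metricdiver}--\eqref{T-diver}. Your closing shortcut --- fixing $g^{(D)}=g$ and one connection by direct differentiation, then letting Lemma \ref{unicidadConexion} and Theorem \ref{thm:3ten=dualst} force the dual connection and the tensor --- is a clean way to avoid the second third-derivative computation and is fully consistent with the paper's framework.
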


\subsubsection*{The KL-divergence}

A particular case of canonical divergence is the KL-divergence  in $\mathcal{P}_+(I)$ which are defined by
\begin{align}\label{ec-KL}
    D_{KL}(\mu \| \nu)&=\sum_{i\in I}\mu_{i}\log \frac{\mu_i}{\nu_i}.
\end{align}

The verification of this claim can be seen in \cite[Section 4.4.3]{jost}. The idea behind the proof is that for a torsion-free dual structures $(\nabla^{(1)} ,\nabla^{(-1)})$, the formulae in \eqref{divCanonica} can be expressed as: 
$$D(p\|q)=\Psi(p)+\varphi(q)-\vartheta^i(p)\eta_i(q),$$
for suitable function $\Psi,\varphi,\vartheta^i$ and $\eta_i$. The previous description of $D$ coincides with equation \eqref{ec-KL} defining the  KL-divergence. 

The key property at this point is that KL-divergence is related with most of the geometric notions associated to information theory  
\cite[section 3.2]{amariMeth}. Indeed, we can relate the Shannon entropy (global information) and the Fisher metric (local information) via the KL-divergence. This claim follows classical fact that KL-divergence recovers the Fisher metric (as divergence in $\mathcal{P}_+$) and 
$$D_{KL}(p\|q)=h(p,q)-h(p)$$
 where $h(p,q)$ is the {\it differential cross-entropy} of $p$ and $q$,  and $h(P)$ is the {\it marginal differential entropy} of $p$. See proof in
\cite[section 4.3]{jost}.

\section{The warped product for statistical structure}\label{sec:warped}	


The goal of this section is to extend the statistical geometric structure to the warped product of two statistical manifolds.  In particular, we will study the main geometrical notion presented before, dual structure, symmetric 3-tensor, and divergence functions, under this metric product. As we mention before, the advantage of warped product over a usual cartesian product is that we can modify distances and volumes in one direction of the product, but preserve the angles. Partial results on these goals can be found in \cite{leonard}. The main result in this section is the expression for the lifting of the Amari-Chentsov tensor and the divergence under the warped product.

\begin{rmk}
Indeed, there is the option to modify the length on each direction of the Cartesian product, this is developed by a {\bf double warped product}, but as this is an iteration of the one warped product, the same results presented here are also valid for the double warped product. The formula given for double warped product are similar but tedious and for our purpose there is no additional information.
\end{rmk}

Here we will use same construction in Proposition \ref{prop:lift-conn} for the connections in dual structures on both $B$ and $F$, because this procedure works for any connection. For this reason, we obtain a pair of connections, namely $^f\!\cc^{(1)} $and $ ^f\!\cc^{(-1)}$, in the warped product $B\times F$. First step is to verify if $(^f\!\cc^{(1)},^f\!\cc^{(-1)})$ is a dual structure. This was already done in \cite{leonard}. 

\begin{proposition}
Let $B$ and $F$ two manifolds each one with a couple of affine connections $(\lB^{(1)},\lB^{(-1)})$ and $(\lF^{(1)},\lF^{(-1)})$ respectively.
\begin{itemize}
    \item If $(\lB^{(1)},\lB^{(-1)})$ and $(\lF^{(1)},\lF^{(-1)})$ are torsion free, then $\lf^{(1)}$ and $\lf^{(-1)}$ are also trosion free.
    \item If $(\lB^{(1)},\lB^{(-1)})$ and $(\lF^{(1)},\lF^{(-1)})$ are dual respect to $g_B$ and $g_F$ respectively, then $(\lf^{(1)},\lf^{(-1)})$ are dual with respect to $g_f$.
\end{itemize}
In particular, if $(B,g_B,\lB^{(1)},\lB^{(-1)})$ and $(F,g_F,\lF^{(1)},\lF^{(-1)})$ are statistical manifolds, then\\
$(B\times_f F,g_f,\lf^{(1)},\lf^{(-1)})$ is also statistical manifold.
\end{proposition}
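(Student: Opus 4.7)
The plan is to verify both claims by testing the defining identities on horizontal and vertical lifts of vector fields, since these span the spaces $\mathfrak{L}(B)\oplus\mathfrak{L}(F)$ and both sides of every identity are tensorial. The explicit formulas of Proposition~\ref{prop:lift-conn}, which by the remark apply to any affine connection on the factors, will be the main computational tool, together with the three identities $g_f(X^H,Y^H)=g_B(X,Y)\circ\pi$, $g_f(U^V,V^V)=\tilde f^2\,g_F(U,V)\circ\sigma$, and $g_f(X^H,U^V)=0$, and the fact that $\mathrm{grad}(f)$ is horizontal (so perpendicular to every vertical lift).

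For torsion-freeness I would handle the three cases separately. On $(X^H,Y^H)$ the lifted connection is $(\lB^{(\epsilon)}_X Y)^H$, so the torsion equals $([X,Y]_B)^H=[X^H,Y^H]$, using $\lB^{(\epsilon)}$-torsion-freeness. On $(U^V,V^V)$, the term $-(g_f(U^V,V^V)/f)\mathrm{grad}(f)$ is symmetric in $U$ and $V$ and thus cancels in the torsion, leaving $(\lF^{(\epsilon)}_U V-\lF^{(\epsilon)}_V U)^V=[U,V]^V=[U^V,V^V]$. On $(X^H,U^V)$, the identity $\lf^{(\epsilon)}_{X^H}U^V=\lf^{(\epsilon)}_{U^V}X^H=(Xf/f)U^V$ gives zero torsion, matching $[X^H,U^V]=0$.

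For duality I would check the identity $Zg_f(X,Y)=g_f(\lf^{(1)}_Z X,Y)+g_f(X,\lf^{(-1)}_Z Y)$ on each combination of horizontal and vertical lifts. The case $(X^H;Y^H,Z^H)$ reduces to duality on $B$; the case $(U^V;V^V,W^V)$ reduces to duality on $F$ after observing that the $\mathrm{grad}(f)$ terms drop out against vertical lifts by orthogonality, and that $U^V$ annihilates $\tilde f$ so only the $F$-derivative of $g_F(V,W)$ survives; the case $(X^H;U^V,V^V)$ compares $X^H(\tilde f^2\,g_F(U,V)\circ\sigma)=2f(Xf)\,g_F(U,V)$ with $2(Xf/f)\tilde f^2 g_F(U,V)$; and the remaining mixed cases $(U^V;X^H,Y^H)$, $(X^H;Y^H,U^V)$, $(U^V;X^H,V^V)$ are either trivially $0=0$ or hinge on the precise cancellation $(Xf/f)\tilde f^2 g_F(U,V)-(\tilde f^2 g_F(U,V)/f)(Xf)=0$ coming from the orthogonality of $\mathrm{grad}(f)$ with vertical lifts.

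The main technical obstacle is keeping the bookkeeping straight in the mixed cases, especially $(U^V;X^H,V^V)$, where one must carefully use that $g_f(X^H,\mathrm{grad}(f)^H)=X f$ and that $g_f(X^H,(\lF^{(-1)}_U V)^V)=0$, so that the two contributions of the warping term cancel exactly. Once these cancellations are recorded, the statistical manifold consequence is immediate: combining the torsion-free and dual statements gives a torsion-free dual pair $(\lf^{(1)},\lf^{(-1)})$ on $B\times_f F$ with respect to $g_f$.
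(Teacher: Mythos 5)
Your proposal is correct and complete: since the torsion and the duality defect $Zg_f(X,Y)-g_f(\lf^{(1)}_ZX,Y)-g_f(X,\lf^{(-1)}_ZY)$ are both tensorial, checking them on horizontal and vertical lifts via the formulas of Proposition~\ref{prop:lift-conn} suffices, and your case analysis---including the symmetric cancellation of the $\mathrm{grad}(f)$ term in the vertical torsion and the key mixed-case cancellation $(Xf/f)\tilde f^2 g_F(U,V)-(\tilde f^2 g_F(U,V)/f)(Xf)=0$ using $g_f(X^H,\mathrm{grad}(f)^H)=Xf$ and the orthogonality of $\mathrm{grad}(f)$ to vertical lifts---is exactly right. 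The paper itself omits the proof, deferring to \cite{leonard} and \cite{olga}, and your argument is the same standard lift-and-verify route taken there, so no comparison beyond this is needed.
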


For a complete demostration of these calims, we refer to \cite{olga} and reference therein. The second step is to lift the Amari-Chentsov tensor and verify that the lifting coincides with the Amari-Chentsov tensor of the liftings. 

\begin{theorem}\label{estructuraestadisticaWarped}

 The statistical structures $(B, g_{B}, \TB)$ and $(F, g_{F}, \TF)$ associated to  $(B,\lB^{(1)},\lB^{(-1)})$ and $(F, \lF^{(1)}, \lF^{(-1)})$ produce the statistical structure of the warped product:
\begin{equation}
    (M=B \times_{f} F, g_{f}, \TB \oplus \widetilde{f}^{2}{\TF} ).
\end{equation}
    
\end{theorem}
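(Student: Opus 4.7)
The strategy is to use the equivalence between torsion-free dual structures and statistical structures (Theorem \ref{thm:3ten=dualst}): since the previous proposition already gives us the lifted torsion-free dual structure $(\lf^{(1)},\lf^{(-1)})$ on $M=B\times_f F$, it suffices to compute the associated Amari--Chentsov tensor
\[
T_f(X,Y,Z):=g_f\bigl(\lf^{(-1)}_X Y-\lf^{(1)}_X Y,\;Z\bigr)
\]
and verify that it coincides with $\TB\oplus \tilde f^{2}\TF$. Because $T_f$ is tensorial, it is enough to evaluate it on triples of lifted vector fields, and by the splitting $TM=\mathfrak L(B)\oplus\mathfrak L(F)$ we only need to check the four configurations $(X^H,Y^H,Z^H)$, $(X^H,Y^H,V^V)$, $(X^H,V^V,W^V)$ and $(V^V,W^V,U^V)$ (plus those obtained by symmetry).

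For each configuration I would apply the lifting rules of Proposition~\ref{prop:lift-conn}, using the remark that the same rules describe the lift of \emph{any} pair of affine connections from $B$ and $F$ to the warped product. The key observation is that both mixed formulas,
\[
\lf^{(\pm 1)}_{X^H}V^V=\lf^{(\pm 1)}_{V^V}X^H=\bigl(Xf/f\bigr)V^V,\qquad
\lf^{(\pm 1)}_{U^V}V^V=\bigl(\lF^{(\pm 1)}_U V\bigr)^V-\tilde f\, g_F(U,V)\,\mathrm{grad}(f)^H,
\]
have horizontal correction terms that depend only on the warping function and on $g_F$, \emph{not} on which of $\lF^{(1)}$ or $\lF^{(-1)}$ is being lifted. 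Consequently these terms cancel out in the difference $\lf^{(-1)}-\lf^{(1)}$, so the only surviving pieces are $(\lB^{(-1)}_X Y-\lB^{(1)}_X Y)^H$ for horizontal inputs and $(\lF^{(-1)}_U V-\lF^{(1)}_U V)^V$ for vertical inputs.

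Pairing these surviving differences with the third argument via $g_f=\pi^{*}g_B+\tilde f^{2}\sigma^{*}g_F$ immediately gives $T_f(X^H,Y^H,Z^H)=\TB(X,Y,Z)$ for the purely horizontal case and $T_f(V^V,W^V,U^V)=\tilde f^{2}\,\TF(V,W,U)$ for the purely vertical case, while the two mixed configurations vanish because the horizontal and vertical subspaces are $g_f$-orthogonal. This is precisely the formula $T_f=\TB\oplus \tilde f^{2}\TF$, and since $\TB$ and $\TF$ are each symmetric, so is $T_f$; thus $(g_f,T_f)$ is the required statistical structure on $M$.

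The main obstacle is purely bookkeeping: one must be meticulous with the four configurations and with the orthogonality relations in $g_f$, and in particular justify that the horizontal correction term in the vertical--vertical lift is indeed independent of the choice among $\lF^{(1)},\lF^{(-1)}$ (a point that follows from the remark after Proposition~\ref{prop:lift-conn}, since that term is dictated by compatibility with $g_f$ and the warping function alone). Once this is granted the computation is short, and the symmetry of $T_f$ serves as an independent consistency check confirming both the torsion-freeness claim of the previous proposition and the dual-structure construction of \cite{leonard}.
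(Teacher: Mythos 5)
Your proposal is correct and takes essentially the same route as the paper's own proof: both compute the Amari--Chentsov tensor of the lifted connections via Proposition~\ref{prop:lift-conn}, note that the warping correction terms (the $(Xf/f)U^V$ terms and the $\mathrm{grad}(f)$ term, which are independent of whether $\lF^{(1)}$ or $\lF^{(-1)}$ is lifted) cancel in the difference $\lf^{(-1)}-\lf^{(1)}$, and then use the $g_f$-orthogonality of horizontal and vertical lifts to conclude $\Tf=\TB\oplus\tilde{f}^{2}\,\TF$. Your explicit check of the mixed third-slot configuration $(X^H,Y^H,V^V)$ is a small point the paper leaves implicit (via symmetry and uniqueness of the tensor), but otherwise the arguments coincide.
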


For the  proof we will use (consistently) the one-to-one relation between 3-tensor and dual structure as was presented in Theorem~\ref{thm:3ten=dualst}.

\begin{proof}

Consider  $X^{H},Y^{H},Z^{H}\in \mathfrak{L}(B)$ and $U^{V},V^{V},W^{V}\in \mathfrak{L}(F)$. We will apply the defining relation of the liftings of connections as in Proposition~\ref{prop:lift-conn}.
\begin{enumerate}
    \item The horizontal lifting is:
    \begin{align}
        \TB(X,Y,Z)^{H} & =g_{B}(\lB^{(-1)}_{X}Y-\lB^{(1)}_{X}Y,Z)^{H}= g_{B}((\lB^{(-1)}_{X}Y)^{H}-(\lB^{(1)}_{X}Y^{H}),Z^{H})=  \nonumber \\
        & = g_{f}(\lf^{(-1)}_{X^{H}}Y^{H}-\lf^{(1)}_{X^{H}}Y^{H},Z^{H})=\Tf(X^{H},Y^{H},Z^{H}).  \label{tensorB}
    \end{align}
    \item The vertical lifting is:
    \begin{align}
        \TF(U,V,W)^{V} & =g_{F}(\lF^{(-1)}_{U}V-\lF^{(1)}_{U}V,W)^{V} = g_{F}((\lF^{(-1)}_{U}V)^{V}-(\lF^{(1)}_{U}V)^{V}, W^{V})\nonumber \\
        & = \frac{1}{\widetilde{f}^{2}}g_{f}\Big((\lf^{(-1)}_{U^{V}}V^{V}+\frac{g_{f}(U^{V},V^{V})}{f}grad(f))-(\lf^{(1)}_{U^{V}}V^{V}+\frac{g_{f}(U^{V},V^{V})}{f} grad(f)), W^{V}\Big)\nonumber \\
        &=\frac{1}{\widetilde{f}^{2}}g_{f}\Big(\lf^{(-1)}_{U^{V}}V^{V}-\lf^{(1)}_{U^{V}}V^{V}, W^{V}\Big)=\frac{1}{\widetilde{f}^{2}} \Tf(U^{V},V^{V},W^{V}). \label{tensorF}
    \end{align}
    \item Finally note that the tensor in $X^{H}$ and $U^{V}$ vanishes because
    \begin{align}
        \Tf(X^{H}, U^{V}, \cdot) & =g_{f}(\lf^{(1)}_{X^{H}}U^{V}-\lf^{(-1)}_{X^{H}}U^{V}, \cdot) =g_{f}(\frac{1}{f}(Xf)U^{V}-\frac{1}{f}(Xf)U^{V}, \cdot)=0.\label{tensorBF}
    \end{align}
\end{enumerate}
From the results ~\eqref{tensorB} ~\eqref{tensorF} and ~\eqref{tensorBF}, and the uniqueness of the tensor, we get that in the warped product 
\begin{equation}\label{tensorwarped}
    \Tf=\TB \oplus \widetilde{f}^2{\TF}
\end{equation}
is the Amari-Chentsov tensor of the liftings. Therefore, the triple $(M,g_{f},\Tf)$ is a statistical manifold.  
\end{proof}

We now proceed to study the divergence under warped product. Note that the formulae for lifting of Amari-Chentsov tensor in \eqref{tensorwarped} motivate us to define a lifting of two divergence functions, i.e., if $D^B$ and $D^F$ are divergence functions on $B$ and $F$ respectively, we define its lifting as:
\begin{equation}\label{eq:warpedDiv}
    D^f=\pi^*D^B+(\tilde{f}_1)^2\sigma^*D^F
\end{equation}
 
 for $\tilde{f}_1(p,q)=\tilde{f}(p)$ for the lifting of $f$ to $M$. 

Before we give the claim on $D^f$, we need the following technical lemma: 
\begin{lemma}\label{lem5.4}
For any collection $\{V_j=(V_{B,j},V_{F,j}): j=1\dots,k\}$ and $\{W_j=(W_{B,j},W_{F,j}): j=1\dots,r\}$ of vector fields on $M=B\times F$ with respective decomposition in $TB$ and $TF$. We get:
$$ D^f(V_1\dots V_k\| W_1\dots W_r)=D^f(V_{B,1}\dots V_{B,k}\| W_{B,1}\dots W_{B,r})+\tilde{f}^2D^F(V_{F,1}\dots V_{F,k}\| W_{F,1}\dots W_{F,r}) $$
\end{lemma}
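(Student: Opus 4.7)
The plan is to compute the multi-derivative $\mathcal{D}:= V_1^1\cdots V_k^1 W_1^2\cdots W_r^2$ acting on the two summands of $D^f=\pi^*D^B+(\tilde{f}_1)^2\sigma^*D^F$, and then restrict to the diagonal $\Delta\subset M\times M$. The first step is to record three annihilation patterns arising from the horizontal/vertical splittings $V_j^1=V_{B,j}^1+V_{F,j}^1$ and $W_j^2=W_{B,j}^2+W_{F,j}^2$: (i) every $V_{F,j}^1$ and $W_{F,j}^2$ annihilates $\pi^*D^B$, since this function depends only on the $B$-coordinates of each factor of $M\times M$; (ii) every $V_{B,j}^1$ and $W_{B,j}^2$ annihilates $\sigma^*D^F$, because horizontal lifts satisfy $d\sigma=0$; and (iii) every $V_{F,j}^1$, $W_{F,j}^2$, and $W_{B,j}^2$ annihilates $(\tilde{f}_1)^2$, since $\tilde{f}_1$ depends only on the first-factor $B$-coordinate.

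For the summand $\pi^*D^B$, pattern (i) shows that in the full Leibniz expansion only the all-horizontal choice of $V_{B,j}^1$'s and $W_{B,j}^2$'s contributes; by the $(\pi\times\pi)$-relatedness of horizontal lifts this collapses, on $\Delta$, to $D^B(V_{B,1}\cdots V_{B,k}\|W_{B,1}\cdots W_{B,r})$. A parallel computation shows that applying the same all-horizontal operator to $(\tilde{f}_1)^2\sigma^*D^F$ contributes nothing on $\Delta$ (by (ii)--(iii) combined with $\sigma^*D^F|_\Delta=0$), so I can rewrite the previous expression as $D^f(V_{B,1}\cdots V_{B,k}\|W_{B,1}\cdots W_{B,r})$, recovering the first term on the right.

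For the summand $(\tilde{f}_1)^2\sigma^*D^F$, the Leibniz rule together with (ii)--(iii) forces every $W_j^2$ to act as its vertical component $W_{F,j}^2$ on $\sigma^*D^F$, and makes each $V_j^1$ split between $V_{B,j}^1$ acting on $(\tilde{f}_1)^2$ and $V_{F,j}^1$ acting on $\sigma^*D^F$. The distinguished term in which every $V_j^1$ lands on $\sigma^*D^F$ yields
\[
(\tilde{f}_1)^2|_\Delta\cdot V_{F,1}^1\cdots V_{F,k}^1 W_{F,1}^2\cdots W_{F,r}^2\sigma^*D^F|_\Delta =\tilde{f}^2 D^F(V_{F,1}\cdots V_{F,k}\|W_{F,1}\cdots W_{F,r}),
\]
which is precisely the second summand on the right.

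The main obstacle is to dispose of the remaining cross terms, indexed by the nonempty subsets $S\subseteq\{1,\ldots,k\}$ of $V$-derivatives that land on $(\tilde{f}_1)^2$ while the complementary $V_{F,j}^1$'s together with all $W_{F,j}^2$'s land on $\sigma^*D^F$. Each such term is a product of a nontrivial derivative of $\tilde{f}^2$ along $B$ with a ``reduced'' iterated derivative $V_{F,S^c}^1 W_{F,1}^2\cdots W_{F,r}^2\sigma^*D^F|_\Delta$. The tool for eliminating these is the defining vanishing property of a divergence, namely $D^F|_{\Delta_F}=0$ together with the vanishing of one-sided first derivatives on $\Delta_F$. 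My plan is to set up an induction on $k$ in which, step by step, the trade of a $V_B$-derivative of $(\tilde{f}_1)^2$ against a missing $V_F$-derivative of $\sigma^*D^F$ forces the corresponding factor to vanish on the diagonal by a balance argument; this combinatorial bookkeeping is the delicate point that I expect to dominate the work of making the proof rigorous.
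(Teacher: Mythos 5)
Your setup and the $k=r=1$ core are correct, and there they match the paper's proof; indeed you are more careful than the paper on one point, since you justify the vanishing of the surviving cross term $(V_{B}^1\tilde{f}_1^2)(W_{F}^2\sigma^*D^F)|_\Delta$ by the vanishing of \emph{first} derivatives of $D^F$ along $\Delta_F$ (a consequence of $D^F\geq 0$ and $D^F|_{\Delta_F}=0$), which is the actual reason, while the paper's displayed computation attributes it to $V_F^1\tilde{f}_1^2=0$. The genuine gap is exactly at the step you defer to an induction: the cross terms indexed by nonempty $S$ do \emph{not} vanish once $k+r\geq 3$, and no combinatorial bookkeeping can make them vanish. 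The reduced factors you must kill carry at least two derivatives of $\sigma^*D^F$, and while the zeroth and first derivatives of a divergence vanish on the diagonal, the mixed second derivative is by definition nondegenerate: $V_F^1W_F^2\sigma^*D^F|_\Delta=-g^F(V_F,W_F)$ by \eqref{metricdiver}. Concretely, for $k=2$, $r=1$ your own Leibniz expansion gives
\begin{align*}
D^f(V_1V_2\|W_1)&=D^B(V_{B,1}V_{B,2}\|W_{B,1})+\tilde{f}^2D^F(V_{F,1}V_{F,2}\|W_{F,1})\\
&\quad -(V_{B,1}\tilde{f}^2)\,g^F(V_{F,2},W_{F,1})-(V_{B,2}\tilde{f}^2)\,g^F(V_{F,1},W_{F,1}),
\end{align*}
so that for $V_1=X^H$, $V_2=U^V$, $W_1=W^V$ the left-hand side equals $-X(\tilde{f}^2)\,g^F(U,W)\neq 0$, while the right-hand side of Lemma~\ref{lem5.4} is zero (each of its terms contains a vanishing vector field in some slot). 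The case $(k,r)=(1,2)$, needed for \eqref{dc-diver} and \eqref{T-diver}, fails similarly through $W_{F,1}^2W_{F,2}^2D^F|_\Delta\neq 0$. So the statement you are asked to prove is itself false for $k+r\geq 3$, and your planned ``balance argument'' cannot close.

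You should know that the paper's own proof shares this defect: it proves $k=r=1$ and then asserts for $kr>1$ that ``the mixed terms vanish,'' which is precisely the false step. In fact these cross terms \emph{must} survive for the program of Theorem~\ref{thm:warpedDiv} to be coherent: they are what produce warping terms of the type $\frac{Xf}{f}U^V$ of Proposition~\ref{prop:lift-conn}(b) in the connections generated by $D^f$ through \eqref{cc-diver}--\eqref{dc-diver}; if Lemma~\ref{lem5.4} held as stated, $D^f$ would generate the direct-product connections, contradicting item (3) of that theorem. The correct general formula is your Leibniz expansion with the cross terms retained; your instinct that they are ``the delicate point'' was sound, but the resolution is that they belong in the statement, not that they vanish. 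Only the $k=r=1$ case (the metric identity) stands as proved, by you and by the paper alike.
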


\begin{proof}
    For sake of simplicity, we first prove the claim for $k=1=r$. It is routine to verify that:
    \begin{align*}
        D^f(V\|W)&=V^1W^2(\pi^*D^B+(\tilde{f}_1)^2\sigma^*D^F)|_\Delta=\pi^*(V^1_BW^2_FD^B|_\Delta)+V^1\sigma^*((W_F^2\tilde{f}_1^2)D^F|_\Delta+\tilde{f}_1^2W_F^2D^F|_\Delta)\\
        &=\pi^*(V^1_BW^2_FD^B|_\Delta)+\sigma^*((V_F^1\tilde{f}_1^2)D^F|_\Delta+\tilde{f}_1^2V_F^1W_F^2D^F|_\Delta)=D^B(V_B\|W_B)+\tilde{f}^2D^F(V_F\|W_F),
    \end{align*}
   where the second equality comes from Leibniz rule by $W_F$ and the third one is Leibniz rule by $V_F$ by knowing that $W_F^2\tilde{f}_1^2=V_F^2\tilde{f}_1^2=0$.  
   
The case of $kr>1$, the sums also split in $B$-terms and $F$-terms, while the mixed terms vanish. Hence, the verification is similar but with more terms and not so easy  notation. 
\end{proof}


\begin{theorem}\label{thm:warpedDiv}
    If $D^B$ and $D^F$ are divergences generating the metrics $g^B$ and $g^F$, and also the symmetric 3-tensor $T^B$ and $T^F$ respectively, then 
    \begin{enumerate}
    \item $D^f$ is a divergence function.
    \item $D^f$ generates the warped metric, i.e., $$ g_f(X,Y)=g^{(B)}(X_B,Y_B)+\tilde{f}^2g^{(F)}(X_F,Y_F).$$
\item The induced dual connections by $D^f$ is the warped product of the induced connections by $D^B,$ and $ D^F$.
 
  \item The induced tensor by $D^f$ is the warped product of the induced tensor by $D^B, D^F$, i.e., $$T^D=T^B+\tilde{f}^2T^F$$.
    \end{enumerate}
       
\end{theorem}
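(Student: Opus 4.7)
The plan is to deduce each of the four claims from Lemma \ref{lem5.4} combined with the defining relations \eqref{metricdiver}--\eqref{T-diver} for the data generated by a divergence, together with the hypothesis that $D^B$ and $D^F$ generate $(g^B,\nabla^{(D^B)},\nabla^{(D^{B,*})},T^B)$ and $(g^F,\nabla^{(D^F)},\nabla^{(D^{F,*})},T^F)$ respectively.

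For claim (1), non-negativity of $D^f$ is immediate from $D^B \geq 0$, $D^F \geq 0$ and $\tilde{f}>0$; the vanishing on $\Delta\subset M\times M$ follows because $\pi$ and $\sigma$ send $\Delta$ into $\Delta_B$ and $\Delta_F$, where both divergences vanish. The non-degeneracy condition $-X^1 Y^2 D^f|_\Delta>0$ for non-zero $X,Y$ is postponed: it will be a direct consequence of claim (2), since $g_f$ is positive definite.

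For claim (2), apply Lemma \ref{lem5.4} with $k=r=1$ to vector fields $X,Y\in \mathfrak{X}(M)$ decomposed as $X=X_B+X_F$ and $Y=Y_B+Y_F$. Negating both sides and using that $D^B$ generates $g^B$ and $D^F$ generates $g^F$, one obtains
\[
g^{(D^f)}(X,Y)=-D^f(X\|Y)=g^B(X_B,Y_B)+\tilde{f}^2 g^F(X_F,Y_F)=g_f(X,Y),
\]
which is the warped metric.

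For claim (3), apply Lemma \ref{lem5.4} now with $k=2,\,r=1$ to a triple $V,W,Z$, and combine with the defining relation $g^{(D)}(\nabla^{(D)}_V W,Z)=-D(VW\|Z)$:
\[
g_f\bigl(\nabla^{(D^f)}_V W,Z\bigr)=g_B\bigl(\nabla^{(D^B)}_{V_B}W_B,Z_B\bigr)+\tilde{f}^2 g_F\bigl(\nabla^{(D^F)}_{V_F}W_F,Z_F\bigr).
\]
I would then check that the right-hand side coincides with $g_f(\lf^{(1)}_V W, Z)$, where $\lf^{(1)}$ is the warped product of $\nabla^{(D^B)}$ and $\nabla^{(D^F)}$ built as in Proposition \ref{prop:lift-conn} (extended to general torsion-free dual connections via the remark following it). The uniqueness of a connection determined by its $g_f$-pairings, together with case-checking on the three canonical configurations (both horizontal lifts, both vertical lifts, one of each), identifies $\nabla^{(D^f)}$ with $\lf^{(1)}$. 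The dual connection $\nabla^{(D^{f,*})}$ is treated identically using $D^{f,*}(p,q)=D^f(q,p)$; together they form the warped dual structure already constructed in the previous section.

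For claim (4), apply Lemma \ref{lem5.4} in the same way to both $D^f(XY\|Z)$ and $D^f(Z\|XY)$, then substitute into the definition \eqref{T-diver}:
\[
T^{D^f}(X,Y,Z)=T^{D^B}(X_B,Y_B,Z_B)+\tilde{f}^2 T^{D^F}(X_F,Y_F,Z_F),
\]
which, under the identifications of claim (3), is exactly the lifted tensor $T^f=T^B\oplus \tilde{f}^2 T^F$ obtained in Theorem \ref{estructuraestadisticaWarped}.

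The delicate step is the identification in claim (3). While Lemma \ref{lem5.4} delivers the correct $g_f$-inner products, one must still verify that these match Proposition \ref{prop:lift-conn} in the mixed horizontal/vertical case, where the horizontal component of $\lf_{U^V}V^V$ involves $\mathrm{grad}(f)$; the consistency here relies crucially on the duality between $\nabla^{(D^F)}$ and $\nabla^{(D^{F,*})}$ with respect to $g_F$. Once the three pure cases are settled, the general case follows by $C^\infty(M)$-linearity in the first slot and the Leibniz rule in the second.
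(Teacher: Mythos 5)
Your route is the paper's own route: claim (1) from positivity of the two summands in \eqref{eq:warpedDiv}, and claims (2)--(4) by feeding Lemma~\ref{lem5.4} into the defining relations \eqref{metricdiver}--\eqref{T-diver}. Your treatment of (1) and (2) is sound, and indeed for $k=r=1$ the splitting in Lemma~\ref{lem5.4} is genuine, because the only cross term carries a first derivative $W_F^2D^F$, which vanishes on the diagonal.

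The gap is exactly the step you flag as ``delicate'' in claim (3), and it does not close. Test your displayed identity on $V=X^H$, $W=U^V$, $Z=Y^V$: since $V_F=0$ and $W_B=0$, its right-hand side is $0$, whereas Proposition~\ref{prop:lift-conn}(b) gives $g_f(\lf^{(1)}_{X^H}U^V,Y^V)=\tilde f\,(Xf)\,g_F(U,Y)\neq 0$, so the equality you propose to verify is false before any identification is attempted. Worse, the identity itself fails in this configuration, because for $k\geq 2$ the mixed terms in Lemma~\ref{lem5.4} do \emph{not} vanish: the horizontal first-slot derivative hits the warping factor $\tilde f_1^2$ and multiplies the second-order term $U^1Y^2D^F|_\Delta=-g_F(U,Y)\neq 0$. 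A direct computation gives $D^f(X^HU^V\|Y^V)=2\tilde f\,(X\tilde f)\,\bigl(U^1Y^2D^F\bigr)\big|_\Delta=-2\tilde f\,(Xf)\,g_F(U,Y)$, hence $\nabla^{(D^f)}_{X^H}U^V=\frac{2Xf}{f}\,U^V$, while the dual divergence $(D^f)^*$, whose warping factor sits in the second slot, gives $\nabla^{((D^f)^*)}_{X^H}U^V=0$. This is a legitimate dual pair for $g_f$, but it is not the warped pair of Proposition~\ref{prop:lift-conn}, where both mixed terms equal $\frac{Xf}{f}U^V$; and your appeal to the duality of $(\nabla^{(D^F)},\nabla^{(D^{F,*})})$ cannot repair this, since duality with respect to $g_f$ only constrains the \emph{sum} of the two mixed coefficients (via $X^Hg_f(U^V,Y^V)=2\tilde f(Xf)g_F(U,Y)$), not each one separately. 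Concretely, take $B=F=\mathbb{R}$ with $D^B(x,y)=D^F(x,y)=\frac12(x-y)^2$ and any nonconstant $f>0$: then $T^B=T^F=0$, yet \eqref{T-diver} yields $T^{(D^f)}(\partial_x,\partial_u,\partial_u)=2f f'\neq 0$, so claim (4) fails as well in the form $T^B\oplus\tilde f^2T^F$. To your credit you isolated precisely the dangerous case, which the paper's two-sentence proof (and the proof of Lemma~\ref{lem5.4}, which merely asserts that mixed terms vanish when $kr>1$) passes over; but ``relies crucially on the duality'' is not an argument, and the needed equality is in fact false, so claims (3) and (4) cannot be established along this line for the mixed horizontal/vertical directions.
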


\begin{proof}
As conditions for divergence functions is done from inequalities, and inequalities are preserved by sum and product by no-negative functions, then $D^f$ in \eqref{eq:warpedDiv} is also a divergence function, and first claim follows.
    The proof of remaining claims is just a direct consequences of the previous lemma applied to the relations ~\eqref{metricdiver}-\eqref{T-diver}.
\end{proof}

Finally, despite of we have generated warped structure from warped product of divergences, this is not true for canonical divergence. For this, just recall the defining equation for canonical divergence in \eqref{divCanonica}, and note that 
for $\gamma_{p,q}$ the (unique) geodesic joining $p$ and $q$. We get: 
$${{g}}_{f}(V,W)=-VW\Big(\int_{0}^{1} t \| \overset{\cdot}{\gamma}^{f}_{(p_B,p_F),(q_B,q_F)}(t) \|^{2} dt\Big),$$
for $\gamma^f=(\gamma^B,\gamma^f)$ geodesic warped, and
$${{g}}_{f}(V,W)\neq -VW\Big(\int_{0}^{1} t\| \overset{\cdot}{\gamma}^B_{p_B,q_B}(t)\|^2dt+\tilde{f}^2\int_0^1 t\| \overset{\cdot}{\gamma}^F_{p_{F},q_{F}}(t)\|^2 dt \Big),$$
because we do not have that  $\gamma^B$ or $\gamma^F$ are geodesics. This yields that warped product of canonical divergences is not canonical divergence. also note that this implies that isostatical immersion does not carry enough information just because we can immerse warped product statistical manifolds but does not recover warped product of statistical model.

As a direct consequence of the previous fact, the warped product does not preserve most of the geometric structure coming from canonical divergences. In the particular case of information geometry, most of its main concepts 
(like Shannon entropy, Fisher metric,
 expected log-likelihood ratio, information gain, relative entropy, among others) are not preserved by warped products, just because they are strongly related to the KL-divergence as particular case of canonical divergence. 

As conclusion, the geometry on the background of information geometry is preserved by warped products, but in the context of information theory this product is meaningless.


\end{document}